\newtheorem{theorem}{Theorem}[section]
\newtheorem{corollary}[theorem]{Corollary}
\newtheorem{lemma}[theorem]{Lemma}
\newtheorem{definition}[theorem]{Definition}
\newtheorem*{theorem*}{Theorem}
\newtheorem*{lemma*}{Lemma}
\newtheorem*{remark*}{Remark}
\newtheorem*{definition*}{Definition}
\newtheorem*{proposition*}{Proposition}
\newtheorem*{corollary*}{Corollary}
\numberwithin{equation}{section}
\newcommand{\real}{\mathbb{R}}
\let\ced=\c         
\def\qed{\,\unskip\kern 6pt \penalty 500
\raise -2pt\hbox{\vrule \vbox to8pt{\hrule width 6pt
\vfill\hrule}\vrule}\par}
\definecolor{darkblue}{rgb}{0.05, .05, .65}
\definecolor{darkgreen}{rgb}{0.1, .65, .1}
\definecolor{darkred}{rgb}{0.8,0,0}
\newcommand{\beqn}{\begin{equation}}
\newcommand{\eeqn}{\end{equation}}
\newcommand{\bear}{\begin{eqnarray}}
\newcommand{\eear}{\end{eqnarray}}
\newcommand{\bean}{\begin{eqnarray*}}
\newcommand{\eean}{\end{eqnarray*}}
\begin{document}

\title{\huge \bf A porous medium equation with spatially inhomogeneous absorption. Part II: Large time behavior}
\author{
\Large Razvan Gabriel Iagar\,\footnote{Departamento de Matem\'{a}tica
Aplicada, Ciencia e Ingenieria de los Materiales y Tecnologia
Electr\'onica, Universidad Rey Juan Carlos, M\'{o}stoles,
28933, Madrid, Spain, \textit{e-mail:} razvan.iagar@urjc.es}
\\[4pt] \Large Diana-Rodica Munteanu\,\footnote{Faculty of Psychology and Educational Sciences, Ovidius University of Constanta, 900527, Constanta, Romania, \textit{e-mail:} diana.rodica.merlusca@gmail.com}\\
}
\date{}
\maketitle

\begin{abstract}
We study the large time behavior of solutions to the Cauchy problem for the quasilinear absorption-diffusion equation
$$
\partial_tu=\Delta u^m-|x|^{\sigma}u^p, \quad (x,t)\in\real^N\times(0,\infty),
$$
with exponents $p>m>1$ and $\sigma>0$ and with initial conditions either satisfying
$$
u_0\in L^{\infty}(\real^N)\cap C(\real^N), \quad \lim\limits_{|x|\to\infty}|x|^{\theta}u_0(x)=A\in(0,\infty)
$$
for some $\theta\geq0$. A number of different asymptotic profiles are identified, and uniform convergence on time-expanding sets towards them is established, according to the position of both $p$ and $\theta$ with respect to the following critical exponents
$$
p_F(\sigma)=m+\frac{\sigma+2}{N}, \quad \theta_*=\frac{\sigma+2}{p-m}, \quad \theta^*=N.
$$
More precisely, solutions in radially symmetric self-similar form decaying as $|x|\to\infty$ with the rates
$$
u(x,t)\sim A|x|^{-\theta_*}, \quad {\rm or} \quad u(x,t)\sim \left(\frac{1}{p-1}\right)^{1/(p-1)}|x|^{-\sigma/(p-1)},
$$
are obtained as asymptotic profiles in some of these cases, while asymptotic simplifications or logarithmic corrections in the time scales also appear in other cases. The uniqueness of some of these self-similar solutions, left aside in the first part of this work, is also established.
\end{abstract}

\smallskip

\noindent {\bf MSC Subject Classification 2020:} 35B33, 35B40, 35C06, 35K15, 35K65, 34D05.

\smallskip

\noindent {\bf Keywords and phrases:} large time behavior, spatially inhomogeneous absorption, slowly decaying data, asymptotic simplification, critical exponents.

\section{Introduction}

The goal of this paper is to perform a detailed study of the large time behavior of solutions to the Cauchy problem associated to the following absorption-diffusion equation featuring a spatially inhomogeneous absorption
\begin{equation}\label{eq1}
u_t=\Delta u^m-|x|^{\sigma}u^p, \quad (x,t)\in\real^N\times(0,\infty),
\end{equation}
with continuous, non-negative initial condition
\begin{equation}\label{ic}
u(x,0)=u_0(x)\geq0, \quad x\in\real^N, \quad u_0\in L^{\infty}(\real^N)\cap C(\real^N), \quad u_0\not\equiv0,
\end{equation}
and in the range of exponents
\begin{equation}\label{range.exp}
1<m<p, \quad \sigma>0.
\end{equation}
Eq. \eqref{eq1} features a competition between the diffusion and the absorption terms. Since the diffusion equation with $m>1$ taken alone is a conservative one, preserving the $L^1$ norm of the initial condition (in case of an integrable one) along the evolution, while the absorption tends to reduce the total mass of any solution, the effect of joining these two terms produces interesting mathematical properties and new behaviors which are not specific to any of the two terms let alone.

The homogeneous equation, that is
\begin{equation}\label{eq1.hom}
u_t=\Delta u^m-u^p
\end{equation}
is by now well understood in our range of exponents \eqref{range.exp} with respect to the asymptotic behavior of its solutions as $t\to\infty$, but its mathematical study originated several techniques that have been then employed in a large number of different problems. It has been noticed that the solutions to Eq. \eqref{eq1.hom} approach as $t\to\infty$ profiles which vary depending on

$\bullet$ the position of $p$ with respect to $p_F=m+2/N$, which has been initially identified in relation with reaction-diffusion equations (see for example \cite{Fu66, GKMS80}) but it has been shown to play a fundamental role also for Eq. \eqref{eq1.hom}.

$\bullet$ the behavior of the initial condition $u_0$ as $|x|\to\infty$.

A classification of a number of cases for the large time behavior depending on the previous thresholds has been established in \cite{KP85, KP86, KU87, KV88}. Among other results, it has been noticed that, in the range $p>p_F$, an \emph{asymptotic simplification} is in force, leading to convergence towards self-similar profiles related to the pure porous medium equation. Considering $p=p_F$ in \cite{GV91} led to the development of a new dynamical systems technique known as \emph{the S-theorem}. The most difficult range related to Eq. \eqref{eq1.hom} is the one of compactly supported (or rapidly decaying at infinity) initial conditions when $p\in(1,p_F)$. For such data, asymptotic profiles in the form of \emph{very singular self-similar solutions} had been identified in \cite{BPT86, PT86, Le96, Le97, PW88, PZ91}, to name but a few of them (including some devoted to the fast diffusion range $m<1$). The panorama of the large time behavior has been completed with the works by Kwak \cite{Kwak98, Kwak98b} establishing the asymptotic profile for a critical behavior as $|x|\to\infty$. Techniques developed in all the above mentioned works became then valuable tools for a number of different problems involving convection terms, $p$-Laplacian diffusion, gradient terms etc. Let us stress that all this research listed here depends strongly on the relation $p>m$; indeed, when $1<p\leq m$, the large time behavior is completely different, as seen for example in \cite{MPV91, CV96, CV99, CVW97} and a number of interesting problems (in higher space dimensions) are still open.

Equations such as \eqref{eq1}, involving a weighted absorption term, have been proposed in problems from mathematical biology \cite{GMC77, N80}. Motivated by these models, Peletier and Tesei studied in \cite{PT85, PT86b} the problem of expansion of supports for compactly supported initial conditions. More recently, the mathematical theory of Eq. \eqref{eq1} has been developed in the \emph{strong absorption range} $p\in(0,1)$ in connection with the phenomenon of finite time extinction in \cite{IL23, ILS24}. Since in that range the absorption is the dominating term, the mathematical properties of the solutions to Eq. \eqref{eq1} with $p\in(0,1)$ strongly differ from the ones we shall prove in the present work.

In the first part of this work \cite{IM25}, the authors made a first step towards the large time behavior of solutions to Eq. \eqref{eq1} in the range of exponents \eqref{range.exp} by classifying the self-similar solutions decaying as $t\to\infty$, in the form
\begin{equation}\label{SSS}
u(x,t)=t^{-\alpha}f(\xi), \quad \xi=|x|t^{-\beta}, \quad (x,t)\in\real^N\times(0,\infty),
\end{equation}
with
\begin{equation}\label{SSexp}
\alpha=\frac{\sigma+2}{L}>0, \quad \beta=\frac{p-m}{L}>0, \quad L=\sigma(m-1)+2(p-1).
\end{equation}
Indeed, a complete study of the initial value problem for the differential equation satisfied by the profiles $f$ of solutions in the form \eqref{SSS}, that is,
\begin{equation}\label{SSODE}
\left\{\begin{array}{ll}(f^m)''(\xi)+\frac{N-1}{\xi}(f^m)'(\xi)+\alpha f(\xi)+\beta\xi f'(\xi)-\xi^{\sigma}f^p(\xi)=0, \\
f(0)=A>0, \quad f'(0)=0, \end{array}\right.
\end{equation}
led to a list of different types of solutions based on the behavior of $f(\xi)$ as $\xi\to\infty$. It has been noticed that the exponent
\begin{equation}\label{pcrit}
p_F(\sigma)=m+\frac{\sigma+2}{N}
\end{equation}
is critical and, to keep the presentation as brief as possible, we have established the existence of three different types of self-similar profiles:

$\bullet$ a unique compactly supported self-similar profile, existing only for $m<p<p_F(\sigma)$ and being, in fact, a very singular solution. This means that its initial trace is more concentrated in the origin than a Dirac distribution; more precisely, it satisfies the following two conditions:
\begin{equation}\label{VSS}
\lim\limits_{t\to0}\int_{|x|>\epsilon}u(x,t)\,dx=0, \quad \lim\limits_{t\to0}\int_{|x|<\epsilon}u(x,t)\,dx=+\infty,
\end{equation}
for any $\epsilon>0$.

$\bullet$ a number of profiles presenting the following behavior
\begin{equation}\label{dec.slow}
\lim\limits_{\xi\to\infty}\xi^{(\sigma+2)/(p-m)}f(\xi)=K\in(0,\infty).
\end{equation}

$\bullet$ at least a profile presenting the following behavior
\begin{equation}\label{dec.zero}
\lim\limits_{\xi\to\infty}\xi^{\sigma/(p-1)}f(\xi)=K_p:=\left(\frac{1}{p-1}\right)^{1/(p-1)},
\end{equation}
and we have conjectured in \cite{IM25} that this profile is unique. Notice that this behavior strongly depends on $\sigma$ and its existence is determined by the presence of the weight $|x|^{\sigma}$ in the formulation of Eq. \eqref{eq1}.

\medskip

We end up this introduction by defining the notion of solution that will be used throughout the paper.
\begin{definition}\label{def.sol}
We say that $u$ is a weak solution to Eq. \eqref{eq1} if $u\in L^{\infty}(\real^N\times(\tau,\infty))$ for any $\tau>0$ and, for any compactly supported test function $\zeta\in C_{c}^{2,1}(\real^N\times(0,\infty))$, we have, for any $\tau>0$,
\begin{equation}\label{weak.sol}
\int_{\tau}^{\infty}\int_{\real^N}\left(\zeta_tu+\Delta\zeta u^m-|x|^{\sigma}\zeta u^p\right)\,dx\,dt+\int_{\real^N}\zeta(x,\tau)u(x,\tau)\,dx=0.
\end{equation}
As usual, a weak subsolution (respectively supersolution) is defined by restricting the class of test functions to non-negative ones and replacing the equality sign in \eqref{weak.sol} by the sign $\geq$ (respectively $\leq$). We further say that $u$ is a weak solution to the Cauchy problem \eqref{eq1}-\eqref{ic} if $u(x,t)\to u_0(x)$ as $t\to0$ in weak sense, that is, 
$$
\lim\limits_{t\to0}\int_{\real^N}u(x,t)\varphi(x)\,dx=\int_{\real^N}u_0(x)\varphi(x)\,dx, 
$$
for any $\varphi\in C_c^{1}(\real^N)$.
\end{definition}
Notice that the first part of Definition \ref{def.sol} is adapted in order to allow singular initial conditions producing bounded functions at any $t>0$. Let us remark here that any weak solution to Eq. \eqref{eq1} in the sense of Definition \ref{def.sol} satisfies $u\in C(\real^N\times(0,\infty))$, according to \cite{DiB83}. It is now the right moment to introduce the main results of this work.

\section{Main results}\label{sec.main}

Our main goal, as explained in the Introduction, is to establish the large time behavior of solutions to the Cauchy problem \eqref{eq1}-\eqref{ic} in dependence on the properties of the initial condition $u_0$, showing in particular the relevance of the above listed profiles for it. We also prove along the way the conjecture stated in \cite{IM25} related to the uniqueness of the profile behaving as in \eqref{dec.zero}, as well as some uniqueness results for profiles with the behavior given by \eqref{dec.slow}. In order to fix the notation, let us denote by $f(\cdot;A)$ and $U(\cdot,\cdot;A)$ the profile solving the initial value problem \eqref{SSODE} and, respectively, the self-similar solution defined by
\begin{equation}\label{SSSbis}
U(x,t;A)=t^{-\alpha}f(|x|t^{-\beta};A),
\end{equation}
for $A\in(0,\infty)$. For the easiness of the reading, we divide the presentation of our results into several parts.

\medskip

\noindent \textbf{A. Uniqueness results.} Our first result concerns the uniqueness of some self-similar profiles, completing thus the analysis performed in \cite{IM25}.
\begin{theorem}\label{th.uniq}
(a) There exists a unique $A^*\in(0,\infty)$ such that the self-similar profile $f(\cdot;A^*)$ solving \eqref{SSODE} with $f(0)=A^*$ presents the decay \eqref{dec.zero} as $\xi\to\infty$.

(b) For any $K\in(0,\infty)$, there exists a unique $A(K)\in(0,A^*)$ (depending on $K$) such that the self-similar profile $f(\cdot;A(K))$ solving \eqref{SSODE} with $f(0)=A(K)$ presents the limit behavior \eqref{dec.slow} as $\xi\to\infty$.
\end{theorem}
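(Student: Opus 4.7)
The common thread in both parts is a shooting-type argument in the initial height $A = f(0)$, combined with the continuous dependence of the profiles $f(\cdot;A)$ on $A$ and the phase-plane analysis of the ODE \eqref{SSODE} already set up in \cite{IM25}.

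For part (a), the existence of $A^*$ is granted by \cite{IM25}; the goal is to upgrade this to uniqueness. The plan is to work in the autonomous dynamical system obtained from \eqref{SSODE} via the change of variables employed in the first part. The decay \eqref{dec.zero} corresponds to a specific critical point $P_{\infty}$ of that system whose coordinates are fully determined by $K_p$ and are independent of $A$, whereas the initial height $A$ parametrizes a one-parameter curve of regular trajectories leaving the center manifold associated with $\xi = 0$. A dimension count together with the hyperbolic structure at $P_{\infty}$ shows that the intersection of this curve with the stable manifold of $P_{\infty}$ is transversal, hence isolated. To collapse this discrete set to a single point one proves that the classification of the profiles is monotone in $A$: for $A < A^*$ the profile presents the slow decay \eqref{dec.slow}, while for $A > A^*$ it vanishes at a finite point. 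The value $A^*$ then emerges as the unique separating height.

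The required monotonicity relies on a non-crossing property for the family $f(\cdot;A)$: if $A_1 < A_2$ then $f(\xi;A_1) < f(\xi;A_2)$ on their common positivity set. I would prove this by contradiction, using the uniqueness of the Cauchy problem associated with \eqref{SSODE} at any interior point where $f > 0$: a first crossing would force equality of both $f$ and $(f^m)'$ at that point, hence global coincidence of the two profiles, contradicting $A_1 \neq A_2$.

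For part (b), the existence of some $A(K)$ is again contained in \cite{IM25}. Uniqueness and the full range $K \in (0,\infty)$ would follow by showing that the correspondence $A \mapsto K(A)$ is strictly monotone and continuous on the open interval of initial heights producing the slow decay \eqref{dec.slow}. Strict monotonicity is a direct consequence of the non-crossing property above, since profiles with different tails $K$ are strictly ordered at infinity; continuity follows from stable-manifold theory applied near the critical point of the dynamical system associated with \eqref{dec.slow}; and the asymptotic limits $K(A) \to 0^+$ as $A \to 0^+$ (the profile shrinks uniformly) and $K(A) \to \infty$ as $A \to (A^*)^-$ (by continuity into the separating regime of part (a)) give surjectivity onto $(0,\infty)$. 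The main obstacle is expected to be the rigorous proof of the non-crossing property, since \eqref{SSODE} involves a degenerate diffusion and sign-indefinite lower-order terms: a clean pointwise comparison of $f(\cdot;A_1)$ and $f(\cdot;A_2)$ is not immediately available, and one likely needs to carry out a Sturm-type zero-counting analysis for $f^m(\cdot;A_1) - f^m(\cdot;A_2)$, or equivalently translate the statement into an ordering of trajectories in the phase space where the argument becomes a standard invariance-of-regions computation.
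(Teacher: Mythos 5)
Your proposal follows a shooting/phase-plane route that is genuinely different from the paper's, but it has gaps at precisely the points where the real work lies. First, your proof of the non-crossing property is incorrect: at a first crossing point $\xi_0$ of $f(\cdot;A_1)$ and $f(\cdot;A_2)$ the difference vanishes but its derivative need only be $\leq 0$, not $=0$; a transversal crossing is perfectly compatible with uniqueness for the Cauchy problem issued from $\xi_0$, so no global coincidence is forced. (You acknowledge this and gesture at a Sturm-type analysis, but that is left entirely open; the paper does not prove a general non-crossing lemma, it imports the ordering of the two \emph{monotone decreasing} profiles with decay \eqref{dec.zero} from \cite[Lemma 4.3]{IM25}.) Second, and more importantly, even granting the ordering, your argument for part (a) does not close: two strictly ordered profiles can both satisfy \eqref{dec.zero}, since the leading constant $K_p$ is the same for both and strict ordering at every finite $\xi$ does not prevent the difference from tending to zero at infinity. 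Your step of ``collapsing the discrete set to a single point by proving the classification is monotone in $A$'' is circular -- the existence of a single separating value \emph{is} the uniqueness statement -- and the transversality of the intersection with the stable manifold of $P_\infty$ is asserted, not proved. The mechanism the paper actually uses, and which is absent from your proposal, is a rescaling-and-sliding (sweeping) argument: with $g_i=f_i^m$ and $g_{\lambda}(\xi)=\lambda^{-2m/(m-1)}g_1(\lambda\xi)$, one checks that $g_{\lambda}$ solves the profile equation with absorption coefficient $\lambda^{L/(m-1)}<1$ and has tail constant $K_p^m\lambda^{-mL/[(m-1)(p-1)]}>K_p^m$ for $\lambda\in(0,1)$; sliding $\lambda$ up to the optimal value $\lambda_0<1$ produces a contact point between $g_{\lambda_0}$ and $g_2$ that is excluded by the comparison argument of \cite[Lemma 4.3]{IM25}. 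This exploitation of the scaling structure is the essential idea and cannot be replaced by ordering plus a dimension count.

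For part (b), your plan (strict monotonicity, continuity and surjectivity of $A\mapsto K(A)$) is a plausible alternative scheme, but it again rests on unproved claims: strict monotonicity of the limit $K(A)=\lim_{\xi\to\infty}\xi^{(\sigma+2)/(p-m)}f(\xi;A)$ does not follow from strict ordering at finite $\xi$, and the endpoint limits $K(A)\to 0^+$ and $K(A)\to\infty$ are asserted without argument. The paper avoids this entirely by working at the PDE level: for each fixed $K$ it constructs the minimal and the maximal solution of Eq. \eqref{eq1} with initial trace $K|x|^{-(\sigma+2)/(p-m)}$ (by truncation from below and a supremum construction adapted from Kwak), shows both are invariant under the scaling \eqref{resc.crit} and hence self-similar, and then identifies them through the same sliding uniqueness as in part (a), which yields existence and uniqueness of $A(K)$ simultaneously. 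If you wish to keep a purely ODE-based proof, you would still need the rescaling/sliding comparison for uniqueness at fixed $K$, plus a separate continuity-and-limits analysis for existence; neither is supplied by stable-manifold theory alone.
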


The proof of Theorem \ref{th.uniq} is based on an analysis of the differential equation \eqref{SSODE}, employing a rescaling and sliding technique. Let us mention here that, for $\sigma=0$, that is, Eq. \eqref{eq1.hom}, the uniqueness in part (a) of Theorem \ref{th.uniq} is completely obvious, since for $\sigma=0$ the corresponding solution is an explicit constant. However, the profile $f(\cdot; A^*)$ is no longer explicit and its uniqueness is no longer obvious in the case of Eq. \eqref{eq1}. With respect to part (b) in Theorem \ref{th.uniq}, the corresponding result for $\sigma=0$ is established in \cite{Kwak98}.

\medskip

\noindent \textbf{B. Large time behavior for slowly decaying initial conditions.} Taking into account the classification of the solutions to the Cauchy problem \eqref{SSODE}, it is apparent that the decay behavior \eqref{dec.slow} is critical for the general dynamical properties of Eq. \eqref{eq1}. This is why, we gather in this paragraph the statements of the results concerning the large time behavior of solutions to the Cauchy problem \eqref{eq1}-\eqref{ic} stemming from initial conditions such that
$$
\lim\limits_{|x|\to\infty}|x|^{(\sigma+2)/(p-m)}u_0(x)>0.
$$
The first statement concerns initial conditions such that the previous limit is equal to $\infty$. More precisely, the following large time behavior result holds true:
\begin{theorem}\label{th.asympt.slow1}
Let $u$ be the solution to the Cauchy problem \eqref{eq1}-\eqref{ic} with an initial condition such that
\begin{equation}\label{eq.veryslow}
\lim\limits_{|x|\to\infty}|x|^{(\sigma+2)/(p-m)}u_0(x)=\infty.
\end{equation}
Then, we have
\begin{equation}\label{asympt.slow1}
\lim\limits_{t\to\infty}t^{\alpha}|u(x,t)-U(x,t;A^*)|=0,
\end{equation}
with uniform convergence on sets of the form
\begin{equation}\label{inner.sets}
\mathcal{S}_{c}=\{x\in\real^N: |x|\leq ct^{\beta}\}, \quad c>0,
\end{equation}
where $U(x,t;A^*)$ is the solution defined in \eqref{SSSbis} corresponding to the unique value $A^*$ introduced in Theorem \ref{th.uniq}.
\end{theorem}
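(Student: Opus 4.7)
The plan is to pass to self-similar variables and reduce the claim to a locally uniform squeezing, then identify the limit using the uniqueness from Theorem \ref{th.uniq}. Setting $y=xt^{-\beta}$, $s=\log t$ and
\begin{equation*}
v(y,s):=e^{\alpha s}u(e^{\beta s}y,e^s),
\end{equation*}
a direct computation based on the identities in \eqref{SSexp} shows that $v$ is a weak solution of
\begin{equation*}
v_s=\Delta v^m+\beta y\cdot\nabla v+\alpha v-|y|^\sigma v^p,
\end{equation*}
whose bounded radial stationary solutions are exactly the profiles $f(\cdot;A)$ of \eqref{SSODE}. Since the sets $\mathcal{S}_c$ correspond to $\{|y|\le c\}$ in the rescaled variables, \eqref{asympt.slow1} is equivalent to $v(\cdot,s)\to f(\cdot;A^*)$ locally uniformly on $\real^N$ as $s\to\infty$.

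I would build this convergence from an ordered pair of self-similar barriers. For the upper barrier, the tail behavior \eqref{dec.zero} combined with the algebraic identity $-\alpha+\beta\sigma/(p-1)=-1/(p-1)$ gives $U^*(x,t)\sim K_p|x|^{-\sigma/(p-1)}t^{-1/(p-1)}$ and in particular $U^*(\cdot,0^+)\equiv+\infty$ away from the origin; choosing $t_0>0$ so that $u(\cdot,t_0)\le U^*(\cdot,t_1)$ pointwise for a suitable $t_1>0$ (using a universal decay estimate $\|u(\cdot,t)\|_\infty\le Ct^{-\alpha}$ inherited from Part I combined with a pure-absorption tail bound $u(x,t)\lesssim|x|^{-\sigma/(p-1)}t^{-1/(p-1)}$ for $|x|$ large), the comparison principle yields $u(x,t+t_0)\le U^*(x,t+t_1)$. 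For the lower barrier, the hypothesis \eqref{eq.veryslow} provides, for every $K>0$, a radius $R_K$ with $u_0(x)\ge K|x|^{-(\sigma+2)/(p-m)}$ when $|x|\ge R_K$; writing $U_K:=U(\cdot,\cdot;A(K))$ for the self-similar solution of Theorem \ref{th.uniq}(b), whose tail is precisely $K|y|^{-(\sigma+2)/(p-m)}$, and picking a time shift $\tau(K)$ large enough that $U_K(\cdot,\tau)\le u(\cdot,t_*)$ pointwise for some small $t_*>0$ (the shift depressing the bounded central part of $U_K$ below $\inf_{|x|\le R_K}u(x,t_*)>0$, while the tail exactly matches the hypothesis), comparison produces $u(x,t+\tau+t_*)\ge U_K(x,t+\tau)$.

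Rescaling the two inequalities yields, locally uniformly as $s\to\infty$,
\begin{equation*}
f(|y|;A(K))+o(1)\le v(y,s)\le f(|y|;A^*)+o(1)
\end{equation*}
for every $K>0$. The proof is then completed by letting $K\to\infty$: the monotonicity $A(K)\nearrow A^*$ (inherent in the shooting-sliding construction underlying Theorem \ref{th.uniq}(b), where a larger tail coefficient $K$ forces a larger shooting value through the monotone dependence of the ODE trajectory on $f(0)$) together with continuous dependence of the profiles on the initial value gives $f(\cdot;A(K))\to f(\cdot;A^*)$ locally uniformly, and the squeeze forces $v(\cdot,s)\to f(\cdot;A^*)$ locally uniformly, which in the original variables is exactly \eqref{asympt.slow1} with uniform convergence on each $\mathcal{S}_c$. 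The main obstacle I anticipate is establishing the upper barrier when $u_0$ does not decay at infinity: the pointwise ordering $u(\cdot,t_0)\le U^*(\cdot,t_1)$ does not come from a single comparison argument, and will have to be pieced together by a matched core-and-tail construction where the absorption-dominated regime $|x|^\sigma t\gg 1$ is controlled by the pure-absorption ODE bound while the near-origin region is controlled by the universal $L^\infty$ estimate from Part I; a parallel delicate issue appears in fitting $U_K(\cdot,\tau)$ below $u(\cdot,t_*)$ on the transition annulus $|x|\sim R_K$, where the tail asymptotic ceases to be sharp and the shift must balance central smallness against tail matching.
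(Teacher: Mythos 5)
Your overall strategy (squeeze $u$ between time-shifted self-similar barriers and pass to self-similar variables) is in the right spirit, and your rescaled equation and the algebraic identities are correct. However, the step that closes the argument has a genuine gap. Your lower barrier is $U(\cdot,\cdot;A(K))$, and you conclude by claiming $A(K)\nearrow A^*$ as $K\to\infty$ together with locally uniform convergence $f(\cdot;A(K))\to f(\cdot;A^*)$. Theorem \ref{th.uniq}(b) only gives $A(K)\in(0,A^*)$; that $\sup_K A(K)=A^*$ (rather than some $A_\infty<A^*$) is nowhere established and does not follow from ``monotone dependence on $f(0)$'' alone --- it requires an extra argument (e.g.\ showing that a profile $f(\cdot;A_\infty)$ dominating all $f(\cdot;A(K))$ would have infinite tail coefficient, contradicting the classification of Part I). In addition, the two barrier-fitting steps you flag as ``main obstacles'' are left unresolved, and at least the lower one is genuinely problematic: \eqref{eq.veryslow} controls $u_0$ only outside a large ball, $u_0$ may vanish on a compact set, and by finite speed of propagation $\inf_{|x|\le R_K}u(x,t_*)$ may be zero for small $t_*$, so no time shift of the everywhere-positive $U(\cdot,\cdot;A(K))$ fits below $u(\cdot,t_*)$ without first proving eventual strict positivity; the transition annulus where the tail asymptotics of $f(\cdot;A(K))$ are not yet sharp is a further unaddressed issue. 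Your anticipated difficulty with the upper barrier, by contrast, is a non-issue: Theorem \ref{prop.decaymin} already gives $u(x,t)\le U(x,t;A^*)$ for every bounded datum via comparison with Dirichlet approximations, with no core-and-tail matching needed.

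The paper circumvents all of this by choosing different lower barriers: the solutions $V_K$ with \emph{constant} initial data $K$. Since $U(x,t_K;A^*)\le A^*t_K^{-\alpha}=K$ for $t_K=(K/A^*)^{-1/\alpha}$, comparison yields the explicit sandwich $U(x,t+t_K;A^*)\le V_K(x,t)\le U(x,t;A^*)$, which immediately gives both the asymptotics for $V_K$ and the limit $V_K\to U(\cdot,\cdot;A^*)$ as $K\to\infty$ --- no information about the family $A(K)$ is needed. General data satisfying \eqref{eq.veryslow} are then handled by rescaling, compactness, and comparison with the truncations $\min\{u_{\lambda,0},K\}$, whose solutions converge to $V_K$ as $\lambda\to\infty$ (the Kamin--Peletier argument), so the subsequential limit $U$ satisfies $V_K\le U\le U(\cdot,\cdot;A^*)$ for every $K$. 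If you want to salvage your route, you would need to prove $A(K)\to A^*$ and the eventual positivity/barrier-insertion lemmas; the constant-data detour is substantially cheaper.
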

We thus observe that a large amount of initial conditions that either present a horizontal asymptote or decay to zero as $|x|\to\infty$ but with a rather slow decay rate give rise to solutions that are attracted, as $t\to\infty$, by the unique self-similar solution established in part (a) of Theorem \ref{th.uniq}. On the contrary, when the decay at infinity of the initial condition hits exactly the critical rate, the asymptotic profiles change.
\begin{theorem}\label{th.asympt.slow2}
Let $u$ be the solution to the Cauchy problem \eqref{eq1}-\eqref{ic} with an initial condition such that
\begin{equation}\label{eq.critslow}
\lim\limits_{|x|\to\infty}|x|^{(\sigma+2)/(p-m)}u_0(x)=K\in(0,\infty).
\end{equation}
Then, we have
\begin{equation}\label{asympt.slow2}
\lim\limits_{t\to\infty}t^{\alpha}|u(x,t)-U(x,t;A(K))|=0,
\end{equation}
with uniform convergence on sets of the form $\mathcal{S}_{c}$ as in \eqref{inner.sets}, where $A(K)$ is defined in part (b) of Theorem \ref{th.uniq}.
\end{theorem}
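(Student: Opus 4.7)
My plan is to prove Theorem \ref{th.asympt.slow2} by a self-similar rescaling combined with a barrier (sub/super-solution) argument, exactly as in the spirit of the classical Kamin--Peletier--V\'azquez program for Eq. \eqref{eq1.hom}, the new feature being the identification of the limit provided by the uniqueness result of Theorem \ref{th.uniq}(b). First, I would introduce the rescaled unknown
$$
v(y,s):=e^{\alpha s}u(ye^{\beta s},e^{s}), \qquad s=\log t,\ y=xt^{-\beta},
$$
so that self-similar solutions $U(\cdot,\cdot;A)$ correspond precisely to the stationary profiles $f(\cdot;A)$ of Theorem \ref{th.uniq}, and the convergence asserted in \eqref{asympt.slow2} is equivalent to
$$
v(y,s)\longrightarrow f(|y|;A(K)) \quad \text{as } s\to\infty,
$$
uniformly on compact subsets of $\real^{N}$ (the time-expanding sets $\cs_{c}$ in the original variables become the fixed ball $\{|y|\le c\}$ after rescaling).

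Next, fix $\e>0$ small. By hypothesis \eqref{eq.critslow}, one has
$$
(K-\e)|x|^{-(\sigma+2)/(p-m)}\le u_{0}(x)\le (K+\e)|x|^{-(\sigma+2)/(p-m)} \quad \text{for } |x|\ge R_{\e},
$$
while $u_{0}$ is bounded and $u$ is strictly positive on every compact set for every $t>0$ (so that on compact regions bounds from above and below can be arranged at some positive time $t_{0}$ by a standard smoothing and lower-bound argument, using in particular the absence of extinction in the range \eqref{range.exp}). The crucial observation is that the tail of $U(x,t;A(K\pm\e))$ as $|x|\to\infty$ equals $(K\pm\e)|x|^{-(\sigma+2)/(p-m)}$ \emph{independently of $t$}, because the exponent $\beta(\sigma+2)/(p-m)-\alpha$ vanishes by \eqref{SSexp}. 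I would therefore use the time-shifted self-similar solutions $U(x,t+\tau_{+};A(K+\e))$ and $U(x,t+\tau_{-};A(K-\e))$ as super- and subsolution respectively, with $\tau_{\pm}=\tau_{\pm}(\e,t_{0})$ chosen so that, at time $t=t_{0}$, one has the sandwich
$$
U(x,t_{0}+\tau_{-};A(K-\e))\le u(x,t_{0})\le U(x,t_{0}+\tau_{+};A(K+\e)), \qquad x\in\real^{N}.
$$
The comparison principle for weak solutions (Definition \ref{def.sol}) then propagates this sandwich for all $t\ge t_{0}$, which, once translated into the rescaled variables $(y,s)$, yields (after absorbing the time shifts as $s\to\infty$)
$$
f(|y|;A(K-\e))\le \liminf_{s\to\infty}v(y,s)\le \limsup_{s\to\infty}v(y,s)\le f(|y|;A(K+\e))
$$
locally uniformly in $y\in\real^{N}$.

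Finally, to close the argument I would invoke the continuity of the map $\e\mapsto A(K\pm\e)$, which follows from the uniqueness established in Theorem \ref{th.uniq}(b) combined with the continuous dependence of solutions of the initial value problem \eqref{SSODE} on the parameter $A=f(0)$; consequently $f(\cdot;A(K\pm\e))\to f(\cdot;A(K))$ uniformly on compact subsets of $[0,\infty)$ as $\e\to 0$, and the sandwich above yields $v(\cdot,s)\to f(|\cdot|;A(K))$ locally uniformly as $s\to\infty$, completing the proof. The main obstacle I anticipate is the barrier construction of Step 2: the self-similar profile $f(\cdot;A(K\pm\e))$ is only bounded (not singular) at the origin, so ordering it with $u(x,t_{0})$ on compact sets cannot be achieved by shrinking the time shift alone and will require either a preliminary smoothing/positivity step establishing two-sided bounds for $u(\cdot,t_{0})$ on every ball, or the use of an auxiliary barrier (e.g.\ a truncated Barenblatt profile for the pure porous medium part, which is a valid supersolution upon neglecting the absorption term) to reach the regime where the comparison with the shifted self-similar barriers becomes possible.
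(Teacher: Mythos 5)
Your overall strategy (rescale, sandwich between perturbed self-similar solutions, pass to the limit using uniqueness) is in the Kamin--Peletier spirit, but it diverges from the paper's actual route, and the divergence is precisely where your argument has a genuine gap. The paper does \emph{not} construct global barriers at a fixed time. It uses the algebraic rescaling \eqref{resc.crit}, extracts a locally uniform subsequential limit $U$ via the universal bound of Theorem \ref{prop.decaymin} and DiBenedetto compactness, and then identifies $U$ by showing that its \emph{initial trace} equals $K|x|^{-(\sigma+2)/(p-m)}$ on $\real^N\setminus\{0\}$; this identification is purely local, carried out on balls $B(x_0,|x_0|/2)$ away from the origin by comparison with solutions of Dirichlet problems built from the function $\varphi$ of \eqref{interm4}. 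The decisive ingredient is then Corollary \ref{cor.uniq}: uniqueness of the solution with that initial trace among \emph{all} solutions (not merely among self-similar profiles), which is proved in Theorem \ref{th.uniq}(b) via the minimal/maximal element construction for the class $\mathcal{S}$. Your proposal only invokes the profile uniqueness of Theorem \ref{th.uniq}(b) and tries to substitute global barriers for the reinforced uniqueness; that substitution is where it breaks.

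Concretely, the sandwich
$U(x,t_0+\tau_-;A(K-\e))\le u(x,t_0)\le U(x,t_0+\tau_+;A(K+\e))$ on all of $\real^N$ cannot be arranged by choosing $t_0$ and $\tau_\pm$. As you correctly note, the tail of $U(\cdot,t;A(K+\e))$ is $(K+\e)|x|^{-(\sigma+2)/(p-m)}$ \emph{for every} $t$; hence on the annulus $\{R_\e/2\le|x|\le R_\e\}$ the upper barrier is bounded by a fixed constant of order $(K+\e)R_\e^{-(\sigma+2)/(p-m)}$ uniformly in $\tau_+$, while $u(\cdot,t_0)$ there is only controlled by $\|u_0\|_\infty$ (or by $U(\cdot,t_0;A^*)$ with $A^*>A(K+\e)$), which can be strictly larger. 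Shrinking $\tau_+$ inflates the barrier only near the origin, not on that annulus, and enlarging $\tau_+$ makes it smaller everywhere; a symmetric obstruction affects the lower barrier where $u_0$ may vanish on compact sets. Your proposed remedies (a preliminary two-sided bound for $u(\cdot,t_0)$ on every ball, or an auxiliary porous-medium supersolution) would require already knowing that $u(\cdot,t_0)$ has relaxed near the self-similar profile on compact sets, which is essentially the statement being proved. In addition, your final step uses continuity of $\e\mapsto A(K\pm\e)$ and the locally uniform convergence $f(\cdot;A(K\pm\e))\to f(\cdot;A(K))$; this is plausible (monotonicity of $K\mapsto A(K)$ follows from the sliding comparison in the paper) but is asserted rather than proved, whereas the paper's route avoids needing it altogether. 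To repair your proof you would have to either localize the comparison away from the origin exactly as the paper does (at which point you need Corollary \ref{cor.uniq} to conclude), or supply the missing global ordering, which I do not see how to obtain.
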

Let us remark that, while the self-similar solution $U(\cdot,\cdot;A^*)$ is an attractor for a wide class of initial conditions, as stated in Theorem \ref{th.asympt.slow1}, the self-similar solutions $U(\cdot,\cdot;A(K))$ for $K\in(0,\infty)$ attract only a rather restricted category of solutions, namely, those having initial conditions with a completely similar behavior as $|x|\to\infty$ as themselves. Another remark is that the outcome of both Theorems \ref{th.asympt.slow1} and \ref{th.asympt.slow2} does not depend on the value of $p$. For $\sigma=0$, the analogous result to Theorem \ref{th.asympt.slow2} is established in \cite{Kwak98}, while the analogous result to Theorem \ref{th.asympt.slow1} is much simpler (since the profile $f(\cdot;A^*)$ is just an explicit constant when $\sigma=0$) and corresponds to \cite[Theorem 1]{KP86}.

\medskip

\noindent \textbf{C. Asymptotic simplification for rapidly decaying initial conditions when $p>p_F(\sigma)$.} On the opposite side of the previous paragraph, we say that an initial condition $u_0$ decays rapidly at infinity if
\begin{equation}\label{dec.rapid}
\lim\limits_{|x|\to\infty}|x|^{(\sigma+2)/(p-m)}u_0(x)=0.
\end{equation}
When dealing with solutions stemming from such initial conditions, the critical exponent $p_F(\sigma)$ defined in \eqref{pcrit} plays a decisive role. In the present work, we deal only with the analysis in the range $p>p_F(\sigma)$, which involves an \emph{asymptotic simplification} and thus a convergence towards self-similar solutions of the porous medium equation that are well established. Notice first that, if $p>p_F(\sigma)$, we have
$$
\frac{\sigma+2}{p-m}<N<\infty.
$$
Moreover, let us assume that there is $\theta>(\sigma+2)/(p-m)$ such that
\begin{equation}\label{dec.fast}
\lim\limits_{|x|\to\infty}|x|^{\theta}u_0(x)=l\in(0,\infty).
\end{equation}
In order to state the following result, we recall the explicit fundamental solutions of the porous medium equation
\begin{equation}\label{PME}
\partial_tu=\Delta u^m, \quad (x,t)\in\real^N\times(0,\infty),
\end{equation}
known as \emph{Barenblatt solutions},
\begin{equation}\label{Bar.sol}
\begin{split}
&B(x,t;M)=t^{-N/(mN-N+2)}\left[D(M)-k\left(|x|t^{-1/(mN-N+2)}\right)^2\right]_+^{1/(m-1)},\\
&\quad \quad k=\frac{(m-1)N}{2N(mN-N+2)},
\end{split}
\end{equation}
the constant $D(M)>0$ being chosen such that $\|B(t;M)\|_1=M$ for any $t\in(0,\infty)$. For any $l>0$ and $0<\theta<N$, we also denote by $W_{\theta,l}$ the unique solution to \eqref{PME} with initial trace (in distributional sense) equal to $l|x|^{-\theta}$.
\begin{theorem}\label{th.asympt.fast2}
Let $p>p_F(\sigma)$ and $u$ be the solution to the Cauchy problem \eqref{eq1}-\eqref{ic} with an initial condition $u_0$ satisfying \eqref{dec.fast}. We then have
\begin{enumerate}
\item If $\theta>N$, then there is $M>0$ such that
\begin{equation}\label{asympt.fast2}
\lim\limits_{t\to\infty}t^{N/(mN-N+2)}|u(x,t)-B(x,t;M)|=0,
\end{equation}
with uniform convergence on sets of the form $\{x\in\real^N: |x|\leq ct^{1/(mN-N+2)}\}$, $c>0$.
\item If $(\sigma+2)/(p-m)<\theta<N$ and we assume furthermore that there is $C>0$ such that
\begin{equation}\label{bound.fast}
u_0(x)\leq C|x|^{-\theta}, \quad x\in\real^N,
\end{equation}
then
\begin{equation}\label{asympt.fast3}
\lim\limits_{t\to\infty}t^{\theta/(m\theta-\theta+2)}|u(x,t)-W_{\theta,l}(x,t)|=0,
\end{equation}
with uniform convergence on sets of the form $\{x\in\real^N: |x|\leq ct^{1/(m\theta-\theta+2)}\}$, $c>0$.
\end{enumerate}
\end{theorem}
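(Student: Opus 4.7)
The plan is to treat both items by an \emph{asymptotic simplification} argument: pass to the self-similar variables associated with the target porous medium solution, observe that in these variables the absorption carries a factor that decays exponentially in the new time (because $p>p_F(\sigma)$ in item~(1) and $\theta>(\sigma+2)/(p-m)$ in item~(2)), extract a convergent subsequence by compactness, and identify the limit through a conserved or prescribed quantity.

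For item~(1), set $\alpha_B=N/(mN-N+2)$, $\beta_B=1/(mN-N+2)$ and $v(y,s):=e^{\alpha_B s}u(e^{\beta_B s}y,e^s)$. A short computation shows that $v$ satisfies
\begin{equation*}
v_s=\Delta v^m+\beta_B\,y\cdot\nabla v+\alpha_B v-e^{-\eta_1 s}|y|^\sigma v^p,\qquad \eta_1=\frac{N(p-m)-(\sigma+2)}{mN-N+2}>0,
\end{equation*}
so the reaction is an exponentially vanishing perturbation of the rescaled PME. Comparison with the solution of the pure PME with the same datum provides a uniform $L^\infty$ bound and the universal Aronson--B\'enilan / V\'azquez tail estimate; together with DiBenedetto's H\"older regularity these yield local uniform precompactness of the family $\{v(\cdot,\cdot+s_n)\}_n$. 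Any subsequential limit $V$ solves the autonomous rescaled PME on $\real^N\times\real$. Identification is based on the mass: integrating \eqref{eq1} in space yields
\begin{equation*}
\frac{d}{dt}\int_{\real^N}u(x,t)\,dx=-\int_{\real^N}|x|^\sigma u^p(x,t)\,dx,
\end{equation*}
so the total mass is nonincreasing and $M:=\lim_{t\to\infty}\|u(\cdot,t)\|_{L^1}\in[0,\infty)$ exists. A comparison with a sub-Barenblatt lower barrier, made possible by $p>p_F(\sigma)$, rules out $M=0$; a tightness argument via a Barenblatt upper barrier then transfers the mass $M$ to $V$. By the uniqueness of Barenblatt profiles among mass-$M$ solutions of the rescaled PME, $V$ must be the stationary profile of mass $M$, which translates back into \eqref{asympt.fast2} and removes the need for a subsequence.

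For item~(2), set $\beta_\theta=1/(\theta(m-1)+2)$, $\alpha_\theta=\theta\beta_\theta$ and repeat the rescaling. The equation for $v$ has the same form with reaction coefficient $e^{-\eta_2 s}$, where $\eta_2=(\theta(p-m)-(\sigma+2))/(\theta(m-1)+2)>0$ by hypothesis. The pointwise bound \eqref{bound.fast} allows comparison with the PME solution $W_{\theta,C_1}$, which is stationary in these same self-similar coordinates, producing a uniform bound $v(y,s)\leq F_{\theta,C_1}(|y|)$ that preserves the $|y|^{-\theta}$ decay at infinity independently of $s$. H\"older compactness again gives a subsequential limit $V$ solving the autonomous rescaled PME; using \eqref{dec.fast} and the self-similar nature of the rescaling one checks that $V$ satisfies $\lim_{|y|\to\infty}|y|^\theta V(y,s)=l$ uniformly in $s$. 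By uniqueness of the PME solution with prescribed initial trace $l|x|^{-\theta}$, $V$ coincides with the self-similar profile of $W_{\theta,l}$, giving \eqref{asympt.fast3}.

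The hard parts will be: (i) securing the positive lower bound $M>0$ in item~(1), which amounts to showing that the total absorbed mass $\int_0^\infty\!\!\int|x|^\sigma u^p\,dx\,dt$ remains strictly below $\|u_0\|_{L^1}$ and relies crucially on $p>p_F(\sigma)$ through sharp time-decay estimates for $u$; and (ii) propagating the exact $|x|^{-\theta}$ tail from $u_0$ to the limit $V$ in item~(2), where the barrier \eqref{bound.fast} is essential to prevent the tail from being eroded by the absorption in the outer region.
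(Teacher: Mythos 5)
Your plan is essentially the paper's proof: the same asymptotic simplification via the self-similar rescaling adapted to the target PME solution (your continuous $e^{s}$ version is equivalent to the paper's $\lambda$-family), the same observation that the absorption coefficient vanishes because $p>p_F(\sigma)$ (resp.\ $\theta>(\sigma+2)/(p-m)$), the same compactness step, and the same identification of the limit through the mass in item (1) and through the $l|x|^{-\theta}$ trace, controlled by the barrier $W_{\theta,\overline{l}}$, in item (2). The one point where you go beyond the paper is in flagging and sketching the proof that $M>0$ in item (1) (the paper records $M=\|u_0\|_1-\int_0^\infty\int|x|^\sigma u^p$ but does not verify positivity); the standard route is the decay estimate $\int|x|^\sigma u^p\,dx\lesssim t^{-\eta(N(p-1)-\sigma)}\|u_0\|_1$ with exponent $>1$ precisely when $p>p_F(\sigma)$, which yields $M>0$ by a Gronwall argument.
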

Recall at this point, for the sake of completeness, that the existence and uniqueness of the solution $W_{\theta,l}$ to the porous medium equation, involved in the description of the large time behavior in \eqref{asympt.fast3}, is a classical result established in \cite{BCP84, DK84}. The previous asymptotic simplification has been established for $\sigma=0$ in \cite{KP86} and, since the limiting equation is exactly the same one, not depending on $\sigma$, the proof will have a number of technical details totally similar to the one in the above mentioned reference. This is why, we will skip some steps from the proof of Theorem \ref{th.asympt.fast2}, omitting details that are identical to the proofs in the case $\sigma=0$.

\medskip

\noindent \textbf{D. The borderline case $\theta=N$.} In order to complete the panorama of the results in the range $p>p_F(\sigma)$, we are left with the borderline case $\theta=N$, that is, to investigate the large time behavior of solutions to \eqref{eq1}-\eqref{ic} stemming from initial conditions $u_0$ satisfying
\begin{equation}\label{decay.border}
\lim\limits_{|x|\to\infty}|x|^{N}u_0(x)=l\in(0,\infty).
\end{equation}
As seen in a number of borderline cases \cite{KU87, GV91, IV09, IL17}, modified self-similar profiles involving logarithmic factors in the time scales of the evolution will appear as well in this case. More precisely, letting
$$
\eta:=\frac{1}{mN-N+2},
$$
and assuming that the initial condition satisfies the assumption
\begin{equation}\label{bound.border}
\sup\limits_{x\in\real^N}|x|^Nu_0(x)=C<\infty,
\end{equation}
we have the following result:
\begin{theorem}\label{th.asympt.border}
Let $p>p_F(\sigma)$ and let $u$ be the solution to the Cauchy problem \eqref{eq1}-\eqref{ic} with an initial condition $u_0$ satisfying \eqref{decay.border} and \eqref{bound.border}. Then there exists $M>0$ (depending on $l$) such that
\begin{equation}\label{asympt.border}
\lim\limits_{t\to\infty}t^{N\eta}\left|\frac{1}{\ln\,t}u\left(x,t(\ln\,t)^{-(m-1)}\right)-B(x,t;M)\right|=0,
\end{equation}
with uniform convergence in sets of the form
$$
\{x\in\real^N: |x|\leq ct^{\eta}\}, \quad c>0.
$$
\end{theorem}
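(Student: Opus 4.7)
The plan is to follow the classical rescaling strategy for self-similar asymptotics in the presence of a logarithmic correction, combined with the asymptotic simplification mechanism already in force for $p>p_F(\sigma)$. For $\tau$ large, I would introduce the rescaled family
\begin{equation*}
v_\tau(y,s):=\frac{\tau^{N\eta}}{\ln\tau}\,u\!\left(\tau^{\eta}y,\,\tau(\ln\tau)^{-(m-1)}s\right),\qquad y\in\real^N,\ s>0,
\end{equation*}
built so that $v_\tau(y,1)$ is exactly the rescaled expression inside the absolute value in \eqref{asympt.border} (under the identification $x=\tau^\eta y$, noting that $\tau^{N\eta}B(\tau^\eta y,\tau;M)=B(y,1;M)$). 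A direct substitution shows that $v_\tau$ satisfies the perturbed equation
\begin{equation*}
\partial_s v_\tau=\Delta_y v_\tau^m-\kappa(\tau)\,|y|^{\sigma}v_\tau^{p},\qquad \kappa(\tau):=\tau^{-\eta[N(p-m)-(\sigma+2)]}\,(\ln\tau)^{p-m},
\end{equation*}
and the hypothesis $p>p_F(\sigma)$ is exactly what forces $\kappa(\tau)\to0$ as $\tau\to\infty$. Thus in the rescaled variables the equation asymptotically collapses to the porous medium equation \eqref{PME}, and the theorem reduces to proving $v_\tau(\cdot,1)\to B(\cdot,1;M)$ locally uniformly, for a suitable mass $M>0$.

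Next I would derive uniform estimates on $\{v_\tau\}$. Using \eqref{bound.border} together with a radial supersolution to Eq.~\eqref{eq1} that exploits the $|x|^{-N}$ structure of the initial bound, one shows that $u(x,t)\leq C\,t^{-N\eta}(\ln t)^{2\eta}$ for $t$ sufficiently large, and a direct computation verifies that this translates into a uniform $L^\infty$ bound for $v_\tau$ on sets $\{|y|\leq R\}\times[\delta,2]$ for any $R>0$, $\delta\in(0,1)$. Combined with the interior Hölder regularity of DiBenedetto for degenerate parabolic equations, this yields equicontinuity of $\{v_\tau\}$ on such sets. Extracting a subsequence $\tau_n\to\infty$, $v_{\tau_n}\to v$ locally uniformly on $\real^N\times(0,\infty)$, and since $\kappa(\tau_n)\to0$ a standard stability argument identifies the limit $v$ as a weak solution of \eqref{PME}.

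To pin down $v$, I would determine its initial trace by a tail integration. A change of variable gives
\begin{equation*}
\int_{|y|<R}v_\tau(y,0)\,dy=\frac{1}{\ln\tau}\int_{|x|<R\tau^{\eta}}u_0(x)\,dx,
\end{equation*}
and \eqref{decay.border} yields $\int_{|x|<\rho}u_0(x)\,dx=l\,|S^{N-1}|\ln\rho+O(1)$ as $\rho\to\infty$, whence the right-hand side tends to $M:=l\,\eta\,|S^{N-1}|$, independently of $R>0$. Consequently $v_\tau(\cdot,0)\rightharpoonup M\delta_0$ in distributional sense, and propagating this through the evolution for $v_\tau$ (using the decay of $\kappa(\tau)$) one concludes that $v$ is a solution of \eqref{PME} with initial trace $M\delta_0$, hence $v\equiv B(\cdot,\cdot;M)$ by the uniqueness of the Barenblatt fundamental solution. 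This identification of the limit promotes subsequential convergence to full convergence, and evaluating at $s=1$ after undoing the rescaling yields \eqref{asympt.border}.

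The principal obstacle is to make the initial-trace identification rigorous, namely to conclude that the limit $v$ actually has initial trace $M\delta_0$ (and not just that $v_\tau(\cdot,0)\rightharpoonup M\delta_0$ before passing to the limit); this requires a careful test-function argument that exploits both the pointwise uniform estimates on $v_\tau$ and the decay of $\kappa(\tau)$, so as to rule out mass loss through the (vanishing) absorption term as well as mass escaping to infinity in the rescaled frame as $\tau\to\infty$. A secondary difficulty is the sharp upper bound with the logarithmic prefactor $(\ln t)^{2\eta}$ used in the a priori estimates: since $\|u(\cdot,t)\|_{L^1}$ is infinite, the standard $L^1$–$L^\infty$ smoothing effects for the porous medium equation do not apply directly, and an explicit radial supersolution tailored to the $|x|^{-N}$ structure of \eqref{bound.border} must be constructed.
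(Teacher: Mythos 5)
Your proposal follows essentially the same route as the paper's (sketched) proof: the same logarithmically corrected rescaling (yours parametrized by $\tau=\lambda^{mN-N+2}(\ln\lambda)^{1-m}$ rather than by $\lambda$), the same observation that $p>p_F(\sigma)$ makes the rescaled absorption coefficient vanish, the same identification of the limit as a Barenblatt solution via the Dirac initial trace, and the same undoing of the rescaling as in Kamin--Ughi. Your version is in fact somewhat more explicit than the paper's sketch (the value $M=l\eta|S^{N-1}|$, the a priori bound $u\leq Ct^{-N\eta}(\ln t)^{2\eta}$, and the flagged difficulty of ruling out mass loss in the trace identification are all consistent with, and slightly sharper than, what the paper records).
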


\noindent Note that the borderline case $\theta=N$ is a kind of interpolation between the two ranges $\theta>N$ and $\theta<N$, since the time scales of the two asymptotic profiles $B(\cdot,\cdot;M)$, respectively $W_{\theta,l}$ coincide when $\theta=N$. The effect of this matching is the appearance of logarithmic corrections, as we shall see in the proof of Theorem \ref{th.asympt.border}, given at the end of the paper.

\medskip

\noindent \textbf{Initial conditions not considered in this work}. We intentionally left out of this work the large time behavior of solutions to the Cauchy problem \eqref{eq1}-\eqref{ic} with initial conditions $u_0$ satisfying \eqref{dec.rapid} (including compactly supported ones) in the range $m<p<p_F(\sigma)$, as well as in the borderline case $p=p_F(\sigma)$. Some technical difficulties of a completely different nature than the strategy performed in the proofs of the main results in this paper have been found in dealing with these cases, and they will be considered in a future work. A discussion of these difficulties and shortages is included at the end of the paper.

\section{Well posedness and universal upper bound}\label{sec.prep}

We gather in this section a number of general results in the theory of Eq. \eqref{eq1}, which have independent interest but that will also be useful later. We thus start with the well-posedness result, which is rather standard in the case of equations of absorption-diffusion. Due to its importance for the general theory of Eq. \eqref{eq1}, we state it as a theorem, but in view of precedents, its proof will be only sketched.
\begin{theorem}[Well-posedness and comparison principle]\label{th.wp}
Let $u_0\in L^{\infty}(\real^N)$ satisfying the conditions in \eqref{ic}. Then, there exists a unique weak solution to the Cauchy problem \eqref{eq1}-\eqref{ic}. Moreover, if $u_0$ and $\overline{u}_0$ are two initial conditions satisfying \eqref{ic} and such that $u_0\leq\overline{u}_0$ in $\real^N$ and $u$, $\overline{u}$ are the corresponding solutions to Eq. \eqref{eq1} with initial conditions $u_0$, $\overline{u}_0$, then
\begin{equation}\label{comp.pr}
u(x,t)\leq\overline{u}(x,t), \quad (x,t)\in\real^N\times(0,\infty).
\end{equation}
The inequality \eqref{comp.pr} remains in force if $u$ is a weak subsolution and $\overline{u}$ is a weak supersolution such that $u(x,0)\leq\overline{u}(x,0)$, $x\in\real^N$.
\end{theorem}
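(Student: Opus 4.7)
The plan is to establish well-posedness by a standard three-step approximation and the comparison principle by a monotonicity-based argument, following the blueprint already employed for the homogeneous equation \eqref{eq1.hom} and for related porous medium problems with lower-order terms.

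First I would regularize Eq.~\eqref{eq1} by replacing the unbounded weight $|x|^{\sigma}$ by a smooth bounded cutoff $V_n(x)$ coinciding with $|x|^{\sigma}$ on $B_n$ and truncated outside, the degenerate principal part $\Delta u^{m}$ by a non-degenerate version $\Delta(u^{m}+\d_n u)$ with $\d_n\to 0$, and the initial datum $u_0$ by smooth functions $u_{0,n}\in C_c^{\infty}(\real^N)$ satisfying $1/n\le u_{0,n}\le\|u_0\|_{\infty}+1/n$ and $u_{0,n}\to u_0$ locally uniformly. On balls $B_R$ with Dirichlet data equal to the trace of $u_{0,n}$, the resulting quasilinear, uniformly parabolic problem admits a unique classical solution $u_{n,R}$ by the standard theory. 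The constant $\|u_0\|_{\infty}+1/n$ is a supersolution of the approximate equation, yielding a uniform $L^{\infty}$ bound, while ODE comparison with $y'=-\|V_n\|_{\infty}y^p$, $y(0)=1/n$, provides a strictly positive pointwise lower bound on any $[0,T]$ that keeps the equation uniformly non-degenerate on compact subsets of $\real^N\times(0,\infty)$. Local H\"older and gradient estimates in the spirit of \cite{DiB83}, together with Arzel\`a--Ascoli, then produce a locally uniform limit $u$ as first $R\to\infty$ and then $n\to\infty$; the passage to the limit in \eqref{weak.sol} follows by dominated convergence and the local integrability of $|x|^{\sigma}$.

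For the comparison principle I would work directly at the level of weak sub- and supersolutions. Subtracting the two weak inequalities and testing with a smoothed positive part of $u^{m}-\overline{u}^{m}$ multiplied by a spatial cutoff $\chi_R$ exploits two monotonicities: since $s\mapsto s^{m}$ is increasing, $u^{m}-\overline{u}^{m}$ has the same sign as $u-\overline{u}$ and the diffusive contribution has a favourable sign after integration by parts; since $s\mapsto s^{p}$ is increasing, the absorption term $-|x|^{\sigma}(u^{p}-\overline{u}^{p})$ preserves this sign structure. Because $u$ and $\overline{u}$ are bounded, $u^{p}-\overline{u}^{p}$ is locally Lipschitz in $u-\overline{u}$, so a Gronwall argument on the local quantity $\int_{B_R}(u-\overline{u})_+(\cdot,t)\,dx$, combined with sending $R\to\infty$ to kill the flux terms coming from $\chi_R$, yields \eqref{comp.pr}. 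The existence part above then provides solutions for any admissible $u_0$, and uniqueness is obtained from comparison applied with $\overline{u}_0=u_0$.

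The main obstacle is that the weight $|x|^{\sigma}$ is unbounded at infinity, which precludes any fully global $L^{1}$-contraction argument since the initial data need not lie in $L^{1}(\real^N)$. I would bypass this by working exclusively with localized $L^{1}$ differences against compactly supported cutoffs, leveraging the finite propagation speed of the porous medium part to control the boundary flux terms as $R\to\infty$; the favourable sign of the absorption actually assists rather than hinders this control. Since all these steps are by now standard in the theory of degenerate parabolic equations with absorption, the paper's choice of providing only a sketch in the main text is amply justified.
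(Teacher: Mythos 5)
Your existence argument is sound but follows a genuinely different regularization from the paper's. The paper does not touch the degeneracy of the diffusion or the positivity of the data: it replaces the absorption term by $\frac{|x|^{\sigma}}{1+\epsilon|x|^{\sigma}}\cdot\frac{u^{p}}{1+\epsilon u^{p-1}}$, which is globally bounded and globally Lipschitz in $u$, so that each approximate problem is a Lipschitz perturbation of the porous medium equation on all of $\real^{N}$ and is solvable by off-the-shelf theory; moreover these approximations are monotone in $\epsilon$, so the limit exists pointwise and passes to the weak formulation \eqref{weak.sol} by monotone convergence, with no compactness argument needed. Your scheme (cut off the weight, lift the data, de-degenerate the diffusion, exhaust by balls, DiBenedetto estimates plus Arzel\`a--Ascoli) is heavier but equally standard and more self-contained; both routes work. (A cosmetic slip: $u_{0,n}\in C_{c}^{\infty}(\real^{N})$ and $u_{0,n}\geq 1/n$ everywhere are incompatible; you mean the lower bound to hold on the ball on which you solve.)

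The genuine gap is in the comparison step. For the data class of the theorem the solutions are bounded but in general not integrable, and the localized $L^{1}$-contraction you sketch does not close: after the Kato-type manipulations the best available inequality is
\[
\frac{d}{dt}\int_{\real^N} (u-\overline{u})_{+}\,\chi_{R}\,dx \;\leq\; \int_{\real^N} (u^{m}-\overline{u}^{m})_{+}\,\Delta\chi_{R}\,dx,
\]
and with $\|\Delta\chi_{R}\|_{\infty}\sim R^{-2}$ and only an $L^{\infty}$ bound on the annulus $\{R\leq|x|\leq 2R\}$ the right-hand side is $O(R^{N-2})$, which does not vanish as $R\to\infty$ when $N\geq 2$. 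Finite speed of propagation cannot rescue this, because the initial data are not compactly supported (for slowly decaying $u_{0}$ the solution is everywhere positive, so there is no free boundary to exploit). The absorption term does have the favourable sign you note, but the obstruction is the diffusive flux at infinity, not the zero-order term. The standard repair --- and what the reference the paper leans on for precisely this step (\cite{ILS24}, in the Oleinik duality tradition) actually carries out --- is to test the difference of the two weak formulations against the solution of a regularized backward dual problem with coefficient $a_{\epsilon}\approx(u^{m}-\overline{u}^{m})/(u-\overline{u})$, for which one has the integral gradient estimates needed to control the terms at infinity. As written, your proof of \eqref{comp.pr} fails at the step ``sending $R\to\infty$ to kill the flux terms.''
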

\begin{proof}
The \textbf{global existence} follows in a standard way by an approximation process (see for example \cite{IL25} for similar ideas). Indeed, we consider for any $\epsilon\in(0,1)$ the family of approximating equations
\begin{equation}\label{eq1.approx}
\partial_tu=\Delta u^m-\frac{|x|^{\sigma}}{1+\epsilon|x|^{\sigma}}\frac{u^p}{1+\epsilon u^{p-1}}.
\end{equation}
We observe that, for any $t\geq0$ and $\epsilon\in(0,1)$, we have
$$
\left|\frac{t}{1+\epsilon t}\right|<\frac{1}{\epsilon},
$$
and the function
$$
X\in[0,\infty)\mapsto\frac{X^p}{1+\epsilon X^{p-1}}
$$
is a Lipschitz function with a Lipschitz constant equal to $p/\epsilon$. Thus, for any $\epsilon\in(0,1)$ and $(X,Y)\in[0,\infty)^2$, we deduce that
$$
\frac{|x|^{\sigma}}{1+\epsilon|x|^{\sigma}}\left|\frac{X^p}{1+\epsilon X^{p-1}}-\frac{Y^p}{1+\epsilon Y^{p-1}}\right|
\leq\frac{p}{\epsilon^2}|X-Y|,
$$
and we find that Eq. \eqref{eq1.approx} is a Lipschitz perturbation of the porous medium equation, hence the local well-posedness of the Cauchy problem \eqref{eq1.approx}-\eqref{ic} follows by standard theory. Let $u_{\epsilon}$ be the unique solution to \eqref{eq1.approx}-\eqref{ic}. Since, for $\epsilon_1<\epsilon_2\in(0,1)$, we have
$$
\frac{|x|^{\sigma}}{1+\epsilon_1|x|^{\sigma}}\frac{X^p}{1+\epsilon_1 X^{p-1}}\geq\frac{|x|^{\sigma}}{1+\epsilon_2|x|^{\sigma}}\frac{X^p}{1+\epsilon_2 X^{p-1}}
$$
for any $x\in\real^N$ and $X\in[0,\infty)$, the comparison principle applied to Eq. \eqref{eq1.approx} entails that $u_{\epsilon_1}\geq u_{\epsilon_2}$ in $\real^N\times(0,\infty)$. Observe here that the global existence (that is, for any $t\in(0,\infty)$) of solutions is ensured by the obvious uniform upper bound
\begin{equation}\label{unif.upper}
u_{\epsilon}(x,t)\leq\|u_0\|_{\infty}, \quad (x,t)\in\real^N\times(0,\infty), \quad \epsilon\in(0,1),
\end{equation}
a bound that follows from the fact that the constant $\|u_0\|_{\infty}$ is a strict supersolution to \eqref{eq1.approx} for any $\epsilon\in(0,1)$. It then follows that there exists
$$
u(x,t):=\lim\limits_{\epsilon\to0}u_{\epsilon}(x,t), \quad (x,t)\in\real^N\times(0,\infty),
$$
and a standard argument based on passing to the limit employing the Monotone Convergence Theorem in the weak formulation of \eqref{eq1.approx} (and taking once more into account the uniform bound \eqref{unif.upper} in order to ensure its finiteness for any $t>0$) proves that $u$ is a weak solution to \eqref{eq1}, with initial condition $u_0$, completing the proof of the existence statement.

The \textbf{uniqueness and comparison principle} follow exactly as in \cite[Proof of Theorem 1.1, p. 7-8]{ILS24}, since a closer inspection of the proof therein shows that the restriction on the absorption exponent considered in the quoted reference has no influence in the proof.
\end{proof}
The next preparatory result establishes the existence of a stationary \emph{friendly giant} and a family of supersolutions of the same form.
\begin{lemma}\label{lem.fg}
Let $p>m$ and $\sigma>0$. If $m<p<p_F(\sigma)$, then the function
$$
\Gamma_{m,p,\sigma}(x)=C_*|x|^{-(\sigma+2)/(p-m)}, \quad C_*=\left[\frac{m(\sigma+2)}{p-m}\left(\frac{m\sigma+2p}{p-m}-N\right)\right]^{1/(p-m)}
$$
is a stationary solution to Eq. \eqref{eq1} in $\real^N\setminus\{0\}$ such that $\Gamma_{m,p,\sigma}\in L^1(\real^N\setminus B(0,r))$ for any $r>0$. Moreover, the function
$$
\Gamma_{C}(x)=C|x|^{-(\sigma+2)/(p-m)}
$$
is a supersolution to Eq. \eqref{eq1} for any $p>m$ and $C$ sufficiently large.
\end{lemma}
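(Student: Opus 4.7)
The proof is a direct computation based on the radial Laplacian identity
\[
\Delta(|x|^{-\alpha}) = \alpha(\alpha + 2 - N)|x|^{-\alpha - 2}, \quad x \in \real^N \setminus \{0\}.
\]
Applying this to $\Gamma_C^m(x) = C^m |x|^{-m\theta_*}$, with $\theta_* = (\sigma+2)/(p-m)$, I would obtain
\[
\Delta \Gamma_C^m(x) = C^m \, m\theta_*(m\theta_* + 2 - N)\,|x|^{-m\theta_* - 2},
\]
while the weighted absorption term reads $|x|^\sigma \Gamma_C^p(x) = C^p |x|^{\sigma - p\theta_*}$. The central observation is that $\sigma - p\theta_* = -m\theta_* - 2$ precisely because of the defining relation $(p-m)\theta_* = \sigma + 2$; hence both the stationary equation and the supersolution inequality collapse to purely algebraic conditions on the constant $C$.

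For the stationary-solution statement, equating the two coefficients yields $C^{p-m} = m\theta_*(m\theta_* + 2 - N)$. The assumption $m < p < p_F(\sigma)$ is equivalent to $\theta_* > N$, so $m\theta_* + 2 - N > 2 > 0$, and the positive $(p-m)$-th root produces exactly the constant $C_*$ displayed in the lemma, after rewriting $m\theta_* + 2 - N = (m\sigma + 2p)/(p-m) - N$. The integrability assertion on $\real^N \setminus B(0,r)$ then reduces to $\int_r^\infty s^{N-1-\theta_*}\,ds < \infty$, which again follows from $\theta_* > N$.

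For the supersolution statement, cancellation of the common factor $|x|^{-m\theta_* - 2}$ reduces $\Delta \Gamma_C^m \leq |x|^\sigma \Gamma_C^p$ on $\real^N \setminus \{0\}$ to the scalar inequality
\[
C^m\, m\theta_*(m\theta_* + 2 - N) \leq C^p.
\]
If $m\theta_* + 2 - N \leq 0$ (which is possible once $p > p_F(\sigma)$), the left-hand side is non-positive and the inequality is trivially satisfied for every $C > 0$; otherwise, since $p > m$, it suffices to take $C$ large enough. The only non-computational point, and the main obstacle, is promoting this pointwise inequality to a weak supersolution in the sense of Definition \ref{def.sol} despite the singularity of $\Gamma_C$ at the origin. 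I would handle this by a monotone truncation $\Gamma_C^{(R)} = \min(\Gamma_C, R)$: each constant $R$ is itself trivially a weak supersolution, so the minimum of two supersolutions (a standard property for this class of equations) gives that $\Gamma_C^{(R)}$ is a weak supersolution globally, and then the limit $R \to \infty$ is taken via the Monotone Convergence Theorem in the weak formulation \eqref{weak.sol}. In comparison arguments later in the paper, one can alternatively bypass this step by applying the pointwise inequality on exterior annuli $\{|x| > \varepsilon\}$ and letting $\varepsilon \to 0$, since the bounded solutions of interest are easily controlled near the origin.
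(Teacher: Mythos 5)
Your proposal is correct and follows essentially the same route as the paper: a direct computation of $\Delta\Gamma_C^m$ and $|x|^\sigma\Gamma_C^p$, observing that both scale as $|x|^{-(m\sigma+2p)/(p-m)}$, so that the equation and the supersolution inequality reduce to algebraic conditions on $C$, with $p<p_F(\sigma)\Leftrightarrow\theta_*>N$ giving both positivity of $C_*^{p-m}$ and the integrability on exterior domains. Your closing paragraph on upgrading the pointwise inequality to a weak supersolution near the origin (via truncation by constants or exterior annuli) is extra care that the paper omits entirely, and is a reasonable way to justify the comparison arguments where $\Gamma_C$ is later used.
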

\begin{proof}
The proof follows by direct calculation. Indeed, for any $C>0$ we have
\begin{equation*}
\begin{split}
&\Delta\Gamma_{C}^m(x)=\frac{m(\sigma+2)}{p-m}C^m\left[\frac{m(\sigma+2)}{p-m}+2-N\right]|x|^{-(m\sigma+2p)/(p-m)},\\
&|x|^{\sigma}\Gamma_C^p(x)=C^p|x|^{-(m\sigma+2p)/(p-m)},
\end{split}
\end{equation*}
hence
$$
-\Delta\Gamma_C^m(x)+|x|^{\sigma}\Gamma_C^p(x)=C^m\left[C^{p-m}-\frac{m(\sigma+2)}{p-m}\left(\frac{m\sigma+2p}{p-m}-N\right)\right]
|x|^{-(m\sigma+2p)/(p-m)},
$$
which is obviously positive if $C$ is sufficiently large. In particular, if $p<N(m+\sigma)/(N-2)$ (or for any $p\in(m,\infty)$ if $N\in\{1,2\}$), the value $C=C_*$ leads to a solution, and $\Gamma_C$ is obviously a supersolution if $C>C_*$, since $p>m$. Moreover, the condition $p<p_F(\sigma)$ implies that $(\sigma+2)/(p-m)>N$ and thus $\Gamma_{m,p,\sigma}\in L^1(\real^N\setminus B(0,r))$ for any $r>0$, as claimed.
\end{proof}
Let us remark here that the above family of stationary supersolutions are useful when dealing with initial conditions satisfying \eqref{eq.critslow} or \eqref{dec.rapid}. The next result gives an \textbf{universal upper bound} for (bounded) solutions to Eq. \eqref{eq1}, proving in particular that any bounded solution must have a maximal decay rate as $|x|\to\infty$. This will be very useful when dealing with constant or slowly decaying initial conditions.
\begin{theorem}[Universal upper bound]\label{prop.decaymin}
Let $u$ be the solution to the Cauchy problem \eqref{eq1}-\eqref{ic}. Then
\begin{equation}\label{univ.bound}
u(x,t)\leq U(x,t;A^*), \quad {\rm for \ any} \ (x,t)\in\real^N\times(0,\infty),
\end{equation}
where $U(\cdot,\cdot;A^*)$ is a self-similar solution in the form \eqref{SSS} with a profile satisfying \eqref{dec.zero} as $\xi\to\infty$.
\end{theorem}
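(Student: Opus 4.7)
The plan is to establish \eqref{univ.bound} by the comparison principle (Theorem \ref{th.wp}) applied to time-shifts $U(x,t+\tau;A^*)$ of the self-similar solution, combined with an approximation argument in the initial datum. The reason a direct comparison fails is that, for a general bounded $u_0$, the profile $f(\cdot;A^*)$ decays to zero as $\xi\to\infty$, so $U(\cdot,\tau;A^*)$ cannot dominate $u_0$ at infinity for any $\tau>0$. I would therefore first handle the case of compactly supported data, then remove the restriction by monotone approximation.

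For the compactly supported case, I would assume $u_0\in C(\real^N)$ with $\operatorname{supp}u_0\subset B(0,R_0)$ and $\|u_0\|_\infty\le M$, and look for $\tau>0$ small enough that $u_0(x)\le U(x,\tau;A^*)$ on $\real^N$. Outside $B(0,R_0)$ this is automatic because $u_0=0$. Inside $B(0,R_0)$, combining the positivity and continuity of $f(\cdot;A^*)$ on $[0,\infty)$ with the sharp tail rate \eqref{dec.zero} from \cite{IM25} would yield
\begin{equation*}
\inf_{|x|\le R_0} U(x,\tau;A^*) \;=\; \tau^{-\alpha}\inf_{0\le\xi\le R_0\tau^{-\beta}} f(\xi;A^*) \;\ge\; c\, R_0^{-\sigma/(p-1)}\,\tau^{-1/(p-1)},
\end{equation*}
where the exponent $-\alpha+\sigma\beta/(p-1)=-1/(p-1)$ follows from the defining relations \eqref{SSexp}. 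Since the right-hand side diverges as $\tau\to 0^+$, one can pick $\tau_0>0$ with $U(x,\tau_0;A^*)\ge M$ on $B(0,R_0)$; Theorem \ref{th.wp} then gives $u(x,t)\le U(x,t+\tau_0;A^*)$, and repeating this for every sufficiently small $\tau$ and letting $\tau\to 0^+$ (by continuity of $U$ in $t>0$) yields \eqref{univ.bound} for compactly supported $u_0$.

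To remove the compact support assumption, I would approximate $u_0\in L^\infty(\real^N)\cap C(\real^N)$ from below by $u_0^n:=u_0\chi_n$, where $\chi_n\in C_c(\real^N;[0,1])$ is an increasing family with $\chi_n\nearrow 1$ pointwise. The associated solutions $u_n$ each satisfy \eqref{univ.bound} by the compactly supported case, and the comparison principle makes $(u_n)_n$ monotone increasing with $u_n\le\|u_0\|_\infty$. Passing to the limit in the weak formulation via monotone convergence, together with the uniqueness statement of Theorem \ref{th.wp}, identifies the limit with $u$ and preserves the inequality.

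The main technical obstacle is controlling the divergence of $\inf_{|x|\le R_0}U(x,\tau;A^*)$ as $\tau\to 0^+$: because the interval $[0,R_0\tau^{-\beta}]$ on which the infimum of $f(\cdot;A^*)$ is taken grows unboundedly, one cannot rely on a single positive lower bound for $f$ but must use the sharp rate \eqref{dec.zero} to show that the infimum decays only polynomially in $\tau$, slowly enough to be overcome by the prefactor $\tau^{-\alpha}$. The positivity of $f(\cdot;A^*)$ throughout $[0,\infty)$ and the matching exact tail rate, both provided by Part I \cite{IM25} and confirmed as uniquely determined by Theorem \ref{th.uniq}(a), are therefore the crucial inputs without which the time-shift comparison would not close.
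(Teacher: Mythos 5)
Your proposal is correct and follows essentially the same route as the paper: both proofs rest on the observation that \eqref{dec.zero} together with the identity $\alpha-\beta\sigma/(p-1)=1/(p-1)$ forces $U(\cdot,\tau;A^*)$ to blow up uniformly on compact sets as $\tau\to0^+$, then compare $u$ with the time-shifted solution $U(\cdot,\cdot+\tau;A^*)$ for data that vanish outside a ball, and finally pass to the limit through a monotone approximation of $u_0$ from below. The only (immaterial) difference is that the paper truncates via Dirichlet problems on balls $B(0,n)$ with zero boundary data rather than via compactly supported Cauchy data on $\real^N$.
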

\begin{proof}
Let us observe that $U(\cdot,\cdot;A^*)$ has infinity at every point as initial trace. Indeed, fix $x\in\real^N$, $x\neq0$ (since for $x=0$ is obvious that $U(0,t;A^*)=t^{-\alpha}A^*$). We then infer from \eqref{dec.zero} that
$$
\lim\limits_{t\to0}(|x|t^{-\beta})^{\sigma/(p-1)}f(|x|t^{-\beta};A^*)=\left(\frac{1}{p-1}\right)^{1/(p-1)}.
$$
Noticing that
$$
U(x,t;A^*)=t^{-\alpha}f(|x|t^{-\beta};A^*)=t^{-1/(p-1)}|x|^{-\sigma/(p-1)}(|x|t^{-\beta})^{\sigma/(p-1)}f(|x|t^{-\beta};A^*),
$$
it readily follows that
\begin{equation}\label{interm19}
\lim\limits_{t\to0}U(x,t;A^*)=\infty, \quad x\in\real^N,
\end{equation}
with uniform convergence on compact sets. Since the initial trace of $U(\cdot,\cdot;A^*)$ is not a function in $L^{\infty}(\real^N)$, one cannot invoke directly the comparison principle in Theorem \ref{th.wp} to end the proof. This is why, we have to employ an approximation argument for comparison, as follows: given $u_0\in C(\real^N)\cap L^{\infty}(\real^N)$, for any natural number $n$, let $\{u_{0,n}\}_{n\geq1}$ be a sequence of functions such that $u_{0,n}\in C_c^{2}(B(0,n))$, $u_{0,m}\leq u_{0,n}\leq u_0$ in $B(0,m)$ for any $m<n$ natural numbers, and $u_{0,n}\to u_0$ as $n\to\infty$ uniformly on compact sets in $\real^N$. Let $\overline{u}_n$ be the (unique) solution to the Dirichlet problem
\begin{equation}\label{DP}
\left\{\begin{array}{ll}u_t=\Delta u^m-|x|^{\sigma}u^p, & {\rm in} \ B(0,n)\times(0,\infty),\\ u(x,0)=u_{0,n}(x), & {\rm for} \ x\in B(0,n), \\
u(x,t)=0, & {\rm for} \ (x,t)\in\partial B(0,n)\times(0,\infty), 
\end{array}\right.
\end{equation}
whose existence and uniqueness follow analogously as in the proof of Theorem \ref{th.wp}. The comparison principle then entails that $\overline{u}_m\leq\overline{u}_n$ in $B(0,m)$, provided $m<n$. Moreover, the uniform bound
\begin{equation}\label{UB}
\overline{u}_n(x,t)\leq \|u_0\|_{\infty}, \quad (x,t)\in\real^N\times(0,\infty), \quad n\in\mathbb{N}
\end{equation}
follows from the comparison principle, since the constant $\|u_0\|_{\infty}$ is a supersolution to the Dirichlet problem \eqref{DP}. A straightforward argument based on the dominated convergence theorem (see, for example, the proof of \cite[Proposition 2.8]{Su02} for details), together with the uniqueness part in Theorem \ref{th.wp}, ensure that the pointwise limit (which is actually uniform on compact sets)
$$
u(x,t):=\lim\limits_{n\to\infty}\overline{u}_n(x,t), \quad (x,t)\in\real^N\times(0,\infty),
$$
gives the unique solution of the Cauchy problem \eqref{eq1}-\eqref{ic} with initial condition $u_0$. Fix now a natural number $n$. The compactness of $\overline{B(0,n)}$ ensures the existence of a small time $\tau_n>0$ (depending on $n$) such that
$$
U(x,\tau;A^*)>\|u_0\|_{\infty}\geq\overline{u}_{0,n}(x), \quad x\in B(0,n), \quad \tau\in(0,\tau_n).
$$
Since now $U(\cdot,\tau;A^*)\in L^{\infty}(B(0,n))$, we can apply the comparison principle on $B(0,n)\times(0,\infty)$ to deduce that
\begin{equation*}
\overline{u}_n(x,t)\leq U(x,t+\tau;A^*), \quad (x,t)\in B(0,n)\times(0,\infty),\quad \tau\in(0,\tau_n),
\end{equation*}
and thus, by letting $\tau\to0$,
\begin{equation}\label{interm20}
\overline{u}_n(x,t)\leq U(x,t;A^*), \quad (x,t)\in B(0,n)\times(0,\infty).
\end{equation}
Since \eqref{interm20} holds true for any $n$ and thus for any ball $B(0,n)$, we can let $n\to\infty$ in \eqref{interm20} to obtain the universal bound \eqref{univ.bound}, thus completing the proof.
\end{proof}

\noindent \textbf{Remark.} The universal upper bound in Theorem \ref{prop.decaymin} is an effect of the presence of the weight $|x|^{\sigma}$. Indeed, for $\sigma=0$, there is no minimal decay as $|x|\to\infty$, and, for example, if $u_0\equiv A\in(0,\infty)$, then the solution to the Cauchy problem remains constant in $x$ at any $t>0$, namely 
$$
u(x,t)=\left((p-1)t+\frac{1}{A^{p-1}}\right)^{-1/(p-1)}.
$$

\section{Proof of Theorems \ref{th.uniq} and \ref{th.asympt.slow2}}\label{sec.uniq}

This section is dedicated to the proof of Theorem \ref{th.uniq}, which borrows ideas and techniques employed in \cite{Kwak98} and in the proof of the monotonicity and uniqueness in \cite{IM25}. We also prove Theorem \ref{th.asympt.slow2} at the end of this section, since its proof follows rather readily from the uniqueness in part (b) of Theorem \ref{th.uniq}.
\begin{proof}[Proof of Theorem \ref{th.uniq}]
(a) The existence of $A^*$ follows from \cite[Theorem 1.1]{IM25}. Assume for contradiction that there are two values $A_1<A_2\in(0,\infty)$ such that $f_i(\cdot):=f(\cdot;A_i)$, $i=1,2$ behaves as in \eqref{dec.zero} as $\xi\to\infty$. We infer from \cite[Theorem 1.1 (b)]{IM25} that both $f_1$ and $f_2$ have to be decreasing for $\xi\in(0,\infty)$ and thus \cite[Lemma 4.3]{IM25} ensures that $f_1(\xi)<f_2(\xi)$ for any $\xi\in(0,\infty)$. Similarly to \cite[Lemma 4.3]{IM25}, let us denote by $g_i=f_i^m$ and introduce the rescaling
\begin{equation}\label{resc}
f_{\lambda}(\xi)=\lambda^{-2/(m-1)}f_1(\lambda\xi), \quad g_{\lambda}(\xi)=\lambda^{-2m/(m-1)}g_1(\lambda\xi).
\end{equation}
We derive by direct calculations (see also the proof of \cite[Lemma 4.3]{IM25}) that $g_{\lambda}$ solves the differential equation
\begin{equation*}
g_{\lambda}''(\xi)+\frac{N-1}{\xi}g_{\lambda}'(\xi)+\alpha g_{\lambda}(\xi)^{1/m}+\beta\xi(g_{\lambda}^{1/m})'(\xi)-\lambda^{L/(m-1)}\xi^{\sigma}g_{\lambda}^{p/m}(\xi)=0,
\end{equation*}
with $L$ defined in \eqref{SSexp}. We deduce from \eqref{dec.zero}, \eqref{resc} and the definition of $g_i$ that
$$
\lim\limits_{\xi\to\infty}\xi^{m\sigma/(p-1)}g_{\lambda}(\xi)=K_p^m\lambda^{-mL/[(m-1)(p-1)]}>K_p^m
=\lim\limits_{\xi\to\infty}\xi^{m\sigma/(p-1)}g_{2}(\xi),
$$
for any $\lambda\in(0,1)$. It thus follows that, for any $\lambda\in(0,1)$, there is $R_{\lambda}>0$ such that $g_2(\xi)<g_{\lambda}(\xi)$ for $\xi>R_{\lambda}$. Moreover, the monotonicity of the profile $g_1$ entails that, for $\lambda'<\lambda$, we have
$$
g_{\lambda'}(\xi)=(\lambda')^{-2m/(m-1)}g_1(\lambda'\xi)>\lambda^{-2m/(m-1)}g_1(\lambda\xi)=g_{\lambda}(\xi),
$$
and consequently $R_{\lambda'}<R_{\lambda}$ if $\lambda'<\lambda$. Fixing some $\lambda\in(0,1)$, it is then easy to notice that
\begin{equation*}
\lim\limits_{\lambda'\to0}\min\limits_{[0,R_{\lambda}]}g_{\lambda'}=\lim\limits_{\lambda'\to 0}g_{\lambda'}(R_{\lambda})=\infty,
\end{equation*}
thus there exists an optimal sliding parameter
\begin{equation}\label{interm1}
\lambda_0:=\sup\{\lambda\in(0,1): g_2(\xi)<g_{\lambda}(\xi), \xi\in[0,\infty)\}\in(0,1].
\end{equation}
Observe that $\lambda_0<1$; indeed, if we assume by contradiction that $\lambda_0=1$, it follows from \eqref{interm1} that $g_{\lambda}(\xi)>g_2(\xi)$ for any $\lambda\in(0,1)$ and in particular
$$
g_{\lambda}(0)=\lambda^{-2m/(m-1)}A_1^m>A_2^m=g_2(0), \quad \lambda\in(0,1),
$$
which contradicts $A_1<A_2$. It thus follows that there exists some contact point $\xi_1\in[0,R_{\lambda_0}]$ such that $g_2(\xi_1)=g_{\lambda_0}(\xi_1)$ with $g_{\lambda_0}\geq g_2$ in a neighborhood of $\xi_1$. We proceed as in the last part of the proof of \cite[Lemma 4.3]{IM25} to deduce that such a contact point cannot exist. This contradiction proves that there cannot be two different solutions $f_1=f(\cdot;A_1)$ and $f_2=f(\cdot;A_2)$ with the same behavior \eqref{dec.zero} as $\xi\to\infty$, and thus $A^*$ is unique, as claimed.

\medskip

(b) The uniqueness part is completely similar to the proof in part (a). For $K>0$ fixed, we are left to prove the existence of $A(K)$. To this end, we observe that, if there is a profile satisfying \eqref{dec.slow} as $\xi\to\infty$, then the self-similar solution $u$ given by \eqref{SSS} with profile $f$ satisfies
\begin{equation}\label{init.trace}
\lim\limits_{t\to0}u(x,t)=K|x|^{-(\sigma+2)/(p-m)}, \quad x\in\real^N\setminus\{0\},
\end{equation}
with uniform convergence on compact sets $K\subset\real^N\setminus\{0\}$. Let us introduce the class $\mathcal{S}$ formed by solutions (not necessarily self-similar) to Eq. \eqref{eq1} having an initial trace given by \eqref{init.trace}. We next show that $\mathcal{S}$ has a minimal and a maximal element, adapting a technique from \cite{Kwak98}.

\medskip

\noindent \textbf{Minimal element of $\mathcal{S}$.} We construct the minimal element by truncation. For any natural number $j$, define
\begin{equation}\label{trunc}
v_{j,0}(x)=\min\{j,K|x|^{-(\sigma+2)/(p-m)}\}\in L^{\infty}(\real^N)
\end{equation}
and let $v_j$ be the solution to Eq. \eqref{eq1} with initial condition $v_{j,0}$ (actually, we can extend the previous definition to any $j\in(0,\infty)$, a fact that will be useful for the rescaling step at the end of the current proof). Observing that $v_{j_1,0}\leq v_{j_2,0}$ if $j_1<j_2$, we deduce from the comparison principle that $v_{j_1}\leq v_{j_2}$ provided $j_1\leq j_2$. Moreover, we deduce from Lemma \ref{lem.fg} that there is $C>0$ sufficiently large such that, for any $j\geq1$,
$$
v_{j,0}(x)\leq\Gamma_{C}(x)=C|x|^{-(\sigma+2)/(p-m)}, \quad x\in\real^N,
$$
and $\Gamma_C$ is a supersolution to Eq. \eqref{eq1}. Once more, the comparison principle gives that $v_j(x,t)\leq\Gamma_C(x)$ for any $x\in\real^N$, $t\in(0,\infty)$ and $j\geq1$. We can thus define
$$
v(x,t):=\lim\limits_{j\to\infty}v_j(x,t), \quad (x,t)\in\real^N\times(0,\infty)
$$
and observe that $v(x,t)\leq\Gamma_C(x)$ for $(x,t)\in\real^N\times(0,\infty)$ as well. Thus, $v$ is finite for $x\neq0$. Since $v_{j,0}\in L^{\infty}(\real^N)$, we also deduce from Theorem \ref{prop.decaymin} that $v_{j}(x,t)\leq U(x,t;A^*)$ and thus $v(x,t)\leq U(x,t;A^*)$ for $(x,t)\in\real^N\times(0,\infty)$, where $U(x,t;A^*)$ is the unique self-similar solution established in part (a). The latter upper bound establishes that $v$ is also finite at the origin, more precisely $v(0,t)\leq A^*t^{-\alpha}$ for any $t>0$. Since $\{v_j\}_{j\geq1}$ is an increasing sequence of solutions to Eq. \eqref{eq1}, we readily obtain by applying the Monotone Convergence Theorem in the weak formulation \eqref{weak.sol} that $v$ is a weak solution to Eq. \eqref{eq1} as well.

We next show that $v$ satisfies the condition \eqref{init.trace}. Pick $x_0\in\real^N\setminus\{0\}$ and consider the ball $B_0=B(x_0,|x_0|/2)$. Consider a function $\varphi\in C(\overline{B}_0)$ defined on the closed ball $\overline{B}_0$ such that
\begin{equation}\label{interm4}
\begin{split}
&\varphi(x)=K|x|^{-(\sigma+2)/(p-m)}, \ {\rm in} \ B(x_0,|x_0|/4), \\
&\varphi(x)=\sup\{v(x,t):(x,t)\in B_0\times(0,\infty)\}=:M, \ {\rm on} \ \partial B_0,
\end{split}
\end{equation}
and such that $\varphi(x)\geq K|x|^{-(\sigma+2)/(p-m)}$ in $B_0$. Observe that $\varphi$ is finite, since the ball $B_0$ lies at a positive distance from the origin. Let $w$ be the solution to the Dirichlet problem associated to Eq. \eqref{eq1} on $B_0$ with the conditions
\begin{equation}\label{interm5}
w(x,0)=\varphi(x), \ x\in B_0, \quad w(x,t)=M, \ x\in\partial B_0, \ t>0.
\end{equation}
We deduce from the choice of $\varphi$ that $v_j(x,0)\leq w(x,0)$ for any $j\geq1$, while the choice of $M$ gives that $v_j(x,t)\leq v(x,t)\leq w(x,t)=M$ for any $(x,t)\in\partial B_0\times(0,\infty)$. The comparison principle then gives $v_j(x,t)\leq w(x,t)$ and thus $v(x,t)\leq w(x,t)$ for any $(x,t)\in B_0\times(0,\infty)$. Taking limits as $t\to0$, it then follows that
\begin{equation}\label{interm2}
v_{j,0}(x)=v_j(x,0)\leq v(x,0)\leq w(x,0)=\varphi(x), \quad x\in B_0, \quad j\geq1
\end{equation}
and, in particular, evaluating \eqref{interm2} at $x=x_0$ and $j$ sufficiently large, we find from \eqref{trunc} and \eqref{interm4} that $v(x_0,0)=K|x_0|^{-(\sigma+2)/(p-m)}$. Since $x_0$ has been arbitrarily chosen, we have shown that $v$ satisfies \eqref{init.trace} and thus $v\in\mathcal{S}$.

Let now $w\in\mathcal{S}$. Since $w(x,0)\geq v_{j,0}(x)$ for any $x\in\real^N$ by the definition of $\mathcal{S}$ and \eqref{trunc}, the comparison principle gives $w(x,t)\geq v_j(x,t)$ and then, by letting $j\to\infty$, $w(x,t)\geq v(x,t)$ for any $(x,t)\in\real^N\times(0,\infty)$. This implies that $v$ is the minimal element of $\mathcal{S}$, as claimed.

\medskip

\noindent \textbf{Maximal element of $\mathcal{S}$.} We define
$$
V(x,t):=\sup\{v(x,t): v\in\mathcal{S}\}, \quad (x,t)\in\real^N\times(0,\infty).
$$
We want to prove that $V\in\mathcal{S}$. Since any solution to Eq. \eqref{eq1} is a subsolution to the porous medium equation, we infer from the comparison principle for the porous medium equation \eqref{PME} that, for any $v\in\mathcal{S}$, we have
$$
v(x,t)\leq W_{K}(x,t), \quad (x,t)\in\real^N\times(0,\infty),
$$
where $W_{K}$ is the unique solution to the porous medium equation with initial trace \eqref{init.trace}, its existence being a particular case of the analysis in \cite{BCP84, DK84}. Thus, $V(x,t)\leq W_K(x,t)$ for any $(x,t)\in\real^N\times(0,\infty)$, which implies the boundedness of $V(t)$ for any $t\in(0,\infty)$.

We next prove that $V$ is a weak solution to Eq. \eqref{eq1} and satisfies the initial condition \eqref{init.trace}. To this end, for any natural number $n\geq1$ we set $V_n$ to be the solution to Eq. \eqref{eq1} with initial condition $V_n(x,0)=V(x,1/n)$. By the definition of $V$, we have that
$$
u\left(x,\frac{1}{n}\right)\leq V\left(x,\frac{1}{n}\right)=V_n(x,0), \quad x\in\real^N, \quad {\rm for \ any} \ u\in\mathcal{S},
$$
and thus $u(x,t+1/n)\leq V_n(x,t)$ for any $(x,t)\in\real^N\times(0,\infty)$, $u\in\mathcal{S}$ and $n\geq1$, and passing to supremum, we have $V(x,t+1/n)\leq V_n(x,t)$ for any $t>0$. Fix now $m$, $n\geq1$ such that $m>n$. We then deduce from the previous inequality that
$$
V_n(x,0)=V\left(x,\frac{1}{n}\right)=V\left(x,\frac{1}{m}+\frac{1}{n}-\frac{1}{m}\right)\leq V_m\left(x,\frac{1}{n}-\frac{1}{m}\right), \quad x\in\real^N,
$$
hence, by the comparison principle,
\begin{equation}\label{interm3}
V_n(x,t)\leq V_m\left(x,t+\frac{1}{n}-\frac{1}{m}\right), \quad {\rm that \ is,} \quad V_n\left(x,t-\frac{1}{n}\right)\leq V_m\left(x,t-\frac{1}{m}\right),
\end{equation}
for any $m>n\geq1$ and $x\in\real^N$, $t>1/n$. We can thus define, for $(x,t)\in\real^N\times(0,\infty)$, the function
$$
W(x,t):=\lim\limits_{n\to\infty}V_n\left(x,t-\frac{1}{n}\right).
$$
The boundedness of $W$ follows as above, since $V_n$ are uniformly bounded, independent of $n$. Moreover, \eqref{interm3} and the Monotone Convergence Theorem entail that $W$ is a weak solution to Eq. \eqref{eq1}, while the fact that $V(x,t)\leq V_n(x,t-1/n)$ implies $V(x,t)\leq W(x,t)$, for any $(x,t)\in\real^N\times(0,\infty)$. We next prove that $W$ satisfies \eqref{init.trace}. To this end, pick $x_0\in\real^N\setminus\{0\}$, the ball $B_0=B(x_0,|x_0|/2)$ and the function $\varphi$ introduced in \eqref{interm4}. For any $\epsilon>0$, let $w_{\epsilon}$ be the solution to a similar Dirichlet problem as \eqref{interm5}, but replacing $\varphi$ by $(1+\epsilon)\varphi$ and $M$ by $(1+\epsilon)M$. Since \eqref{init.trace} holds true for any $v\in\mathcal{S}$, we infer from the definition of supremum that there exists $n(\epsilon)\geq1$ (depending on $\epsilon$) such that, for any $n\geq n(\epsilon)$, we have
$$
V_n(x,0)=\sup\{v(x,1/n): v\in\mathcal{S}\}\leq(1+\epsilon)K|x|^{-(\sigma+2)/(p-m)}, \quad x\in\overline{B}_0,
$$
which, together with the obvious comparison with $(1+\epsilon)M$ on $\partial B_0\times(0,\infty)$, readily implies that
$$
V_n(x,t)\leq w_{\epsilon}(x,t), \quad (x,t)\in B_0\times(0,\infty), \quad n\geq n(\epsilon).
$$
In particular, $V_n(x,t-1/n)\leq w_{\epsilon}(x,t-1/n)$ for any $x\in B_0$ and $t\geq 1/n$ and thus, by letting $n\to\infty$ and taking into account the continuity with respect to time of $w_{\epsilon}$, it follows that $W(x,t)\leq w_{\epsilon}(x,t)$, for any $(x,t)\in\real^N\times(0,\infty)$ and for any $\epsilon>0$. By evaluating the latter inequality at $x=x_0$, we find that
\begin{equation}\label{interm6}
\lim\limits_{t\to 0}W(x_0,t)\leq\lim\limits_{t\to 0}w_{\epsilon}(x_0,t)=(1+\epsilon)K|x_0|^{-(\sigma+2)/(p-m)}.
\end{equation}
Since \eqref{interm6} holds true for any $\epsilon>0$, we infer on the one hand that
\begin{equation}\label{interm7}
\lim\limits_{t\to 0}W(x_0,t)\leq K|x_0|^{-(\sigma+2)/(p-m)}.
\end{equation}
On the other hand, we have shown that $V(x,t)\leq W(x,t)$ and thus $v(x,t)\leq W(x,t)$ for any $v\in\mathcal{S}$ and $(x,t)\in\real^N\times(0,\infty)$. Passing to the limit as $t\to 0$ and taking into account \eqref{init.trace}, we readily deduce the opposite inequality to \eqref{interm7}. It then follows that \eqref{interm7} is in fact an equality and thus $W\in\mathcal{S}$. By the definition of $V$, we deduce that $W\leq V$, whence $W=V$ and thus $V\in\mathcal{S}$ and is the maximal element of $\mathcal{S}$.

\medskip

\noindent \textbf{Self-similarity and uniqueness.} Let $u$ be a solution to Eq. \eqref{eq1}. We employ an argument similar to the one in \cite[p. 2766]{IL14} and consider, for $\lambda>0$, the rescaling
\begin{equation}\label{resc.crit}
u_{\lambda}(x,t)=\lambda^{(\sigma+2)/(p-m)}u(\lambda x,\lambda^{\gamma}t), \quad \gamma=\frac{L}{p-m}.
\end{equation}
By direct calculations (whose details we omit), $u_{\lambda}$ is again a solution to Eq. \eqref{eq1}. Moreover, if $u\in\mathcal{S}$, we readily obtain that $u_{\lambda}$ also satisfies \eqref{init.trace} and thus $\mathcal{S}$ is invariant to the rescaling \eqref{resc.crit}. On the one hand, by the definition of $V$ as a supremum, $V$ has to be invariant itself to \eqref{resc.crit} and thus $V$ has self-similar form. On the other hand, recalling the construction of the minimal element $v\in\mathcal{S}$ as a limit of the approximants $v_j$ with initial condition $v_{j,0}$ given in \eqref{trunc}, we notice that
\begin{equation*}
\begin{split}
v_{\lambda,j}(x,0)&=\lambda^{(\sigma+2)/(p-m)}v_{j,0}(\lambda x)=\min\{\lambda^{(\sigma+2)/(p-m)}j,K|x|^{-(\sigma+2)/(p-m)}\}\\
&=v_{\lambda^{(\sigma+2)/(p-m)}j,0}(x).
\end{split}
\end{equation*}
It follows that $v_{\lambda,j}(x,t)=v_{\lambda^{(\sigma+2)/(p-m)}j}(x,t)$ and, passing to the limit as $j\to\infty$, we readily deduce that the minimal element $v$ is invariant to \eqref{resc.crit} and thus it also has self-similar form. Since the uniqueness of a self-similar solution satisfying \eqref{dec.slow} follows exactly as in part (a), we deduce that $v=V=U(\cdot,\cdot;A(K))$ for some $A(K)\in(0,\infty)$, completing the proof.
\end{proof}
Note that we have proved above a slightly stronger result than the one claimed in Theorem \ref{th.uniq}; that is,
\begin{corollary}\label{cor.uniq}
Let $K\in(0,\infty)$. Then, there exists $A(K)\in(0,\infty)$ such that the self-similar solution $U(\cdot,\cdot;A(K))$ is the unique solution to Eq. \eqref{eq1} with an initial trace given by \eqref{init.trace}.
\end{corollary}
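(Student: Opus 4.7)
The plan is to observe that Corollary \ref{cor.uniq} is essentially already contained in the proof of Theorem \ref{th.uniq}(b): there we constructed a minimal element $v$ and a maximal element $V$ of the class $\mathcal{S}$, and the final self-similarity paragraph identified both of them with the same self-similar solution $U(\cdot,\cdot;A(K))$. All that remains is to squeeze an arbitrary element of $\mathcal{S}$ between these two.

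First I would pick an arbitrary $u\in\mathcal{S}$ and establish the sandwich $v(x,t)\leq u(x,t)\leq V(x,t)$ for every $(x,t)\in\mathbb{R}^N\times(0,\infty)$. The upper bound is tautological from the definition $V(x,t)=\sup\{u(x,t):u\in\mathcal{S}\}$. For the lower bound I would reuse the argument from the \emph{Minimal element} paragraph of the proof of Theorem \ref{th.uniq}(b): the bounded truncation $v_{j,0}(x)=\min\{j,K|x|^{-(\sigma+2)/(p-m)}\}$ lies pointwise below the initial trace of any $u\in\mathcal{S}$, so the comparison principle of Theorem \ref{th.wp} yields $u\geq v_j$ for every $j\geq1$, and letting $j\to\infty$ gives $u\geq v$.

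Combining this sandwich with the already-established equality $v=V=U(\cdot,\cdot;A(K))$ from the \emph{Self-similarity and uniqueness} paragraph of the proof of Theorem \ref{th.uniq}(b), we immediately conclude that $u=U(\cdot,\cdot;A(K))$, which is precisely the statement of the corollary.

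The only delicate ingredient is the comparison at the initial time against data that are defined only in the distributional sense away from the origin and are allowed to blow up at $x=0$. This is precisely the situation already treated (via the bounded truncations $v_{j,0}$) in the proof of Theorem \ref{th.uniq}(b), and, if desired, one can make it fully rigorous by the same delayed-time approximation argument used in the proof of Theorem \ref{prop.decaymin}: compare $u(\cdot,t+\tau)$ with $v_j(\cdot,t)$ for $\tau>0$ against the now bounded data $v_j(\cdot,\tau)$, and then let $\tau\to0$. Thus no new obstacle arises, and the corollary is a one-line consequence of the prior work.
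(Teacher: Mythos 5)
Your proposal is correct and matches the paper exactly: the paper treats the corollary as already established by the proof of Theorem \ref{th.uniq}(b), where the minimality of $v$ (via comparison with the bounded truncations $v_{j,0}$), the maximality of $V$ by definition, and the identification $v=V=U(\cdot,\cdot;A(K))$ together squeeze every element of $\mathcal{S}$ onto the self-similar solution. Nothing further is needed.
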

This reinforced uniqueness is useful in the proof of Theorem \ref{th.asympt.slow2}, which we give below. 
\begin{proof}[Proof of Theorem \ref{th.asympt.slow2}]
Let $u$ be the solution to the Cauchy problem \eqref{eq1}-\eqref{ic} with $u_0$ satisfying \eqref{eq.critslow}. We consider the rescaling \eqref{resc.crit}, recalling that $u_{\lambda}$ is a solution to Eq. \eqref{eq1} for any $\lambda>0$. The uniform boundedness in Theorem \ref{prop.decaymin} (which is also uniform with respect to $\lambda$) and standard compactness arguments (see for example \cite{DiB83}) imply that there is a weak solution $U\in C(\real^N\times(0,\infty))$ to Eq. \eqref{eq1} and a sequence $\{\lambda_n\}_{n\geq1}$, such that $u_{\lambda_n}\to U$ with uniform convergence on compact subsets of $\real^N\times(0,\infty)$. Our goal is to identify this limit $U$ with the self-similar solution $U(\cdot,\cdot;A(K))$ satisfying \eqref{init.trace}, and this identification is done throughout the uniqueness result given in Corollary \ref{cor.uniq}. More precisely, since $u_0$ satisfies \eqref{eq.critslow}, we infer that, for any $\epsilon\in(0,K)$, there is $R(\epsilon)>0$ such that
\begin{equation}\label{interm12}
(K-\epsilon)|x|^{-(\sigma+2)/(p-m)}\leq u_0(x)\leq (K+\epsilon)|x|^{-(\sigma+2)/(p-m)}, \quad |x|\geq R(\epsilon).
\end{equation}
Fix now $x_0\in\real^N\setminus\{0\}$ and consider the ball $B(x_0,|x_0|/2)$. We obtain from \eqref{interm12} that there is $\Lambda(x_0,\epsilon)$ sufficiently large such that, for any $\lambda>\Lambda(x_0,\epsilon)$, we have
\begin{equation}\label{interm13}
(K-\epsilon)|x|^{-(\sigma+2)/(p-m)}\leq u_{\lambda,0}(x)=\lambda^{(\sigma+2)/(p-m)}u_0(\lambda x)\leq (K+\epsilon)|x|^{-(\sigma+2)/(p-m)},
\end{equation}
for any $x\in B(x_0,|x_0|/2)$. Taking into account that there is $n(x_0,\epsilon)$ sufficiently large such that $\lambda_n>\Lambda(x_0,\epsilon)$ for $n\geq n(x_0,\epsilon)$, thus \eqref{interm13} applies to $u_{\lambda_n}$, and constructing a similar function $\varphi$ as in \eqref{interm4} (but with $K$ replaced, at its turn, by $K-\epsilon$, respectively $K+\epsilon$), we conclude in the same way as in the proof of part (b) of Theorem \ref{th.uniq} that
$$
(K-\epsilon)|x_0|^{-(\sigma+2)/(p-m)}\leq\liminf\limits_{t\to0}U(x_0,t)\leq\limsup\limits_{t\to0}U(x_0,t)\leq(K+\epsilon)|x_0|^{-(\sigma+2)/(p-m)}.
$$
Since $\epsilon$ has been chose arbitrarily small, we infer that
$$
\liminf\limits_{t\to0}U(x_0,t)=K|x_0|^{-(\sigma+2)/(p-m)}, \quad x_0\in\real^N\setminus\{0\}.
$$
Theorem \ref{th.uniq} then gives that $U=U(\cdot,\cdot;A(K))$, as claimed. Recalling that $U$ is obtained as the limit of $u_{\lambda_n}$ as $n\to\infty$, we infer that in fact
$$
U(x,t;A(K))=\lim\limits_{\lambda\to\infty}u_{\lambda}(x,t),
$$
uniformly on compact sets in $\real^N\times(0,\infty)$. The proof is then completed with a standard step: letting $t=1$, we find that
\begin{equation}\label{interm11bis}
\lambda^{(\sigma+2)/(p-m)}u(\lambda x,\lambda^{L/(p-m)})\to U(x,1;A(K)), \quad {\rm as} \ \lambda\to\infty
\end{equation}
with uniform convergence on compact sets in $\real^N$. If we set $x'=\lambda x$, $t'=\lambda^{L/(p-m)}$, we get that $\lambda^{(\sigma+2)/(p-m)}=(t')^{\alpha}$ and $\lambda=(t')^{\beta}$. Thus, the convergence on compact sets in the $x$ variable in \eqref{interm11bis} translates into the convergence on sets of the form $\mathcal{S}_c$ in the $(x',t')$ variables. Dropping the primes, we arrive at \eqref{asympt.slow2} with uniform convergence on sets of the form $\mathcal{S}_c$, as stated.
\end{proof}

\section{Proof of Theorem \ref{th.asympt.slow1}}\label{sec.slow1}

We continue with the proofs of the results related to the large time behavior of solutions to Eq. \eqref{eq1} and we focus in this section on the one concerning initial conditions with the slowest decay (or no decay at all) as $|x|\to\infty$.
\begin{proof}[Proof of Theorem \ref{th.asympt.slow1}]
Let $V_K$ be the solution to Eq. \eqref{eq1} with constant initial condition $V_K(x,0)=K\in(0,\infty)$ for any $x\in\real^N$. The proof is done in two steps.

\medskip

\noindent In a \textbf{first step}, we establish the convergence \eqref{asympt.slow1} for $V_K$, for any $K\in(0,\infty)$. On the one hand, it follows from Theorem \ref{prop.decaymin} that $V_K(x,t)\leq U(x,t;A^*)$ for any $(x,t)\in\real^N\times(0,\infty)$. On the other hand, owing to the monotonicity of the profile $f(\cdot;A^*)$, we have $U(x,t;A^*)\leq A^*t^{-\alpha}$, thus there is
$$
t_{K}:=\left(\frac{K}{A^*}\right)^{-1/\alpha}\in(0,\infty)
$$
such that
$$
U(x,t_K;A^*)\leq A^*t_{K}^{-\alpha}=K=V_K(x,0), \quad x\in\real^N.
$$
The comparison principle then entails that $U(x,t+t_K;A^*)\leq V_K(x,t)\leq U(x,t;A^*)$, or equivalently
\begin{equation}\label{interm8}
(t+t_K)^{-\alpha}f(|x|(t+t_K)^{-\beta};A^*)\leq V_K(x,t)\leq t^{-\alpha}f(|x|t^{-\beta};A^*), \quad (x,t)\in\real^N\times(0,\infty).
\end{equation}
Observe first that \eqref{interm8} and the previous choice of $t_K$ readily imply that
\begin{equation}\label{sandwich}
\lim\limits_{K\to\infty}V_K(x,t)=U(x,t;A^*), \quad (x,t)\in\real^N\times(0,\infty),
\end{equation}
with uniform convergence on compact subsets of $\real^N\times(0,\infty)$. Letting then $y=xt^{-\beta}$, \eqref{interm8} writes
$$
\left(\frac{t}{t+t_K}\right)^{\alpha}f\left(|y|\left(\frac{t}{t+t_K}\right)^{\beta};A^*\right)\leq t^{\alpha}V_K(y,t)\leq f(|y|;A^*),
$$
whence
\begin{equation}\label{interm9}
\begin{split}
\left(\frac{t}{t+t_K}\right)^{\alpha}&f\left(|y|\left(\frac{t}{t+t_K}\right)^{\beta};A^*\right)-f(|y|;A^*)\leq t^{\alpha}V_K(y,t)-f(|y|;A^*)\\
&=t^{\alpha}(V_K(x,t)-U(x,t;A^*))\leq0.
\end{split}
\end{equation}
Since $f(\cdot;A^*)$ is continuous, we readily get that the left hand side of \eqref{interm9} tends to zero as $t\to\infty$ uniformly on compact sets in $\real^N$ (with respect to the $y$ variable). Recalling that $y=xt^{-\beta}$, we obtain the claimed convergence \eqref{asympt.slow1} for $V_K$, which is uniform on sets of the form $|y|\leq c$, that is $\mathcal{S}_c$.

\medskip

\noindent In a \textbf{second step}, we extend this convergence to general data $u_0$ satisfying \eqref{eq.veryslow}. To this end, let $u$ be the solution to \eqref{eq1}-\eqref{ic} with $u_0$ satisfying \eqref{eq.veryslow} and let $u_{\lambda}$ be its rescaled versions according to \eqref{resc.crit}, which are also solutions to Eq. \eqref{eq1}, having as initial condition
$$
u_{\lambda,0}(x)=\lambda^{(\sigma+2)/(p-m)}u_0(\lambda x), \quad x\in\real^N.
$$
By classical compactness arguments (see for example \cite{DiB83}), we deduce that there is a weak solution $U\in C(\real^N\times[0,\infty])$ to Eq. \eqref{eq1} and a sequence $\{\lambda_n\}_{n\geq1}$, such that $u_{\lambda_n}\to U$ uniformly on compact subsets of $\real^N\times(0,\infty)$. We next proceed as in \cite[Section 3]{KP86} and introduce, for any $K\in(0,\infty)$, the truncated initial conditions
$$
u_{\lambda,K}(x):=\min\{u_{\lambda,0}(x),K\}
$$
and the solutions $U_{\lambda,K}$ with initial conditions $u_{\lambda,K}$. On the one hand, it follows obviously from the comparison principle that $U_{\lambda,K}\leq u_{\lambda}$ in $\real^N\times(0,\infty)$, for any $\lambda>0$, $K>0$. On the other hand, we infer from \eqref{eq.veryslow} that, for any $x\in\real^N\setminus\{0\}$,
$$
\lim\limits_{\lambda\to\infty}u_{\lambda,K}(x)=\min\{K,\lim\limits_{\lambda\to\infty}\lambda^{(\sigma+2)/(p-m)}u_0(\lambda x)\}=K,
$$
thus an identical proof to the one of \cite[Lemma 3]{KP86} ensures that
\begin{equation}\label{interm10}
U_{\lambda,K}(x,t)\to V_{K}(x,t), \quad {\rm as} \ \lambda\to\infty.
\end{equation}
Since $U_{\lambda,K}\leq u_{\lambda}$ for any $\lambda>0$ and $K>0$, we deduce from \eqref{interm10} by letting $\lambda=\lambda_n$ and passing to the limit as $n\to\infty$ that
$$
V_K(x,t)\leq U(x,t)\leq U(x,t;A^*), \quad (x,t)\in\real^N\times(0,\infty),
$$
for any $K\in(0,\infty)$. We then infer from \eqref{sandwich} that $U=U(\cdot,\cdot;A^*)$ and thus $u_{\lambda}(x,t)\to U(x,t;A^*)$ as $\lambda\to\infty$, uniformly on compact sets in $\real^N\times(0,\infty)$. The proof is completed with the same standard step as at the end of the proof of Theorem \ref{th.asympt.slow2}: letting $t=1$, we find that
\begin{equation}\label{interm11}
\lambda^{(\sigma+2)/(p-m)}u(\lambda x,\lambda^{L/(p-m)})\to U(x,1;A^*), \quad {\rm as} \ \lambda\to\infty.
\end{equation}
Setting again $x'=\lambda x$, $t'=\lambda^{L/(p-m)}$, we find that $\lambda^{(\sigma+2)/(p-m)}=(t')^{\alpha}$ and $\lambda=(t')^{\beta}$ and once more the convergence on compact sets in the $x$ variable stated in \eqref{interm11} is equivalent to the convergence on sets of the form $\mathcal{S}_c$ in the $(x',t')$ variables. Removing the primes, we arrive at \eqref{asympt.slow1} with uniform convergence on sets of the form $\mathcal{S}_c$, as stated.
\end{proof}

\section{Proof of Theorem \ref{th.asympt.fast2}}\label{sec.fast}

Let us assume throughout this section that $p>p_F(\sigma)$ and that \eqref{dec.fast} is fulfilled by the initial condition $u_0$. We split the proof into two parts, corresponding to the two cases in the statement of Theorem \ref{th.asympt.fast2}.

\medskip

\noindent \textbf{Case 1: $\theta>N$.} Notice first that this choice, together with \eqref{dec.fast}, imply that $u_0\in L^1(\real^N)$. We proceed as in \cite{KP86} and introduce the scaling
\begin{equation}\label{resc.Bar}
u_{\lambda}(x,t)=\lambda^{N}u(y,s), \quad y=\lambda x, \quad s=\lambda^{N\gamma}t, \quad \lambda>0,
\end{equation}
with $\gamma>0$ to be determined. Straightforward calculation lead to the following equalities:
\begin{equation*}
\begin{split}
&\partial_tu_{\lambda}(x,t)=\lambda^{N(\gamma+1)}\partial_{s}u(y,s),\\
&\Delta u_{\lambda}^m(x,t)=\lambda^{mN+2}\Delta u^{m}(y,s),\\
&|x|^{\sigma}u_{\lambda}^p(x,t)=\lambda^{Np-\sigma}|y|^{\sigma}u^p(y,s).
\end{split}
\end{equation*}
We thus choose $\gamma$ in order to equate $\lambda^{N(\gamma+1)}=\lambda^{mN+2}$, that is,
$$
\gamma=\frac{mN-N+2}{N}
$$
and, taking into account that $u(y,s)$ is a solution to Eq. \eqref{eq1} and multiplying everything by $\lambda^{-mN-2}$, we find that $u_{\lambda}$ is a solution to
\begin{equation}\label{eq.rescN}
\partial_tu_{\lambda}=\Delta u_{\lambda}^m-\lambda^{\sigma+2+N(m-p)}|x|^{\sigma}u_{\lambda}^p, \quad (x,t)\in\real^N\times(0,\infty),
\end{equation}
together with the initial condition
\begin{equation}\label{ic.rescN}
u_{\lambda}(x,0)=\lambda^Nu_0(\lambda x).
\end{equation}
Let us consider next the solution $U_{\lambda}$ of the porous medium equation \eqref{PME} with the same initial condition as in \eqref{ic.rescN}. The comparison principle then entails that $u_{\lambda}\leq U_{\lambda}$ in $\real^N\times(0,\infty)$ and, since by well-known results in the theory of the porous medium equation (see for example \cite[Chapter 18]{VPME}),
$$
\lim\limits_{\lambda\to\infty}U_{\lambda}(x,t)=B(x,t;\|u_0\|_1),
$$
where the right hand side of the previous limit denotes the Barenblatt solution \eqref{Bar.sol} with total mass $\|u_0\|_1$, we infer from standard compactness results \cite{DiB83} that there exists a subsequence $\{\lambda_n\}_{n\geq1}$ and a function $U\in C(\real^N\times(0,\infty))$ such that $u_{\lambda_n}\to U$ as $n\to\infty$, with uniform convergence on compact subsets of $\real^N\times(0,\infty)$. Moreover, the limit satisfies
\begin{equation}\label{interm14}
U(x,t)\leq B(x,t;\|u_0\|_1), \quad (x,t)\in(\real^N\times[0,\infty))\setminus\{(0,0)\}.
\end{equation}
We insert $u_{\lambda_n}$ in the analogous weak formulation to \eqref{weak.sol} for the equation \eqref{eq.rescN} and pass to the limit as $n\to\infty$. Taking into account that the condition $p>p_F(\sigma)$ entails $\sigma+2+N(m-p)<0$, we deduce that the absorption term vanishes in the limit and thus the limit $U$ is a weak solution to the porous medium equation \eqref{PME}. Finally, by testing \eqref{eq.rescN} with a sequence of test functions approximating the constant one function, we straightforwardly deduce that
\begin{equation*}
\begin{split}
\int_{\real^N}u_{\lambda}(x,t)\,dx-\int_{\real^N}u_{\lambda}(x,0)\,dx&=-\lambda^{\sigma+2+N(m-p)}\int_0^t\int_{\real^N}|x|^{\sigma}u_{\lambda}^p(x,\tau)\,dx\,d\tau\\
&=-\lambda^{\sigma+2+N(m-p)}\int_0^t\int_{\real^N}|x|^{\sigma}\lambda^{Np}u^p(\lambda x,\lambda^{N\gamma}\tau)\,dx\,d\tau\\
&=-\lambda^{Nm+2+\sigma}\int_0^{\lambda^{N\gamma}t}\int_{\real^N}|y|^{\sigma}u^p(y,s)\lambda^{-\sigma-N-N\gamma}\,dy\,ds\\
&=-\int_0^{\lambda^{N\gamma}t}\int_{\real^N}|y|^{\sigma}u^p(y,s)\,dy\,ds.
\end{split}
\end{equation*}
We restrict ourselves to $\lambda=\lambda_n$ in the previous equality and pass to the limit as $n\to\infty$ by employing the Dominated Convergence Theorem to find
\begin{equation}\label{interm15}
\int_{\real^N}U(x,t)\,dx=\|u_0\|_1-\int_0^{\infty}\int_{\real^N}u^{p}(y,s)\,dy\,ds, \quad t>0.
\end{equation}
We infer from \eqref{interm14} and by letting $t\to 0$ in \eqref{interm15} (since the right hand side is independent of $t$) that $U(x,0)$ is a Dirac distribution concentrated at $x=0$ with total mass given by the right hand side of \eqref{interm15}. The uniqueness of the fundamental solution with a given mass to the porous medium equation \eqref{PME} and the final step of undoing the rescaling by letting $t=1$, $x'=\lambda x$ and $t'=\lambda^{N\gamma}$ similarly as in the previous sections complete the proof.

\medskip

\noindent \textbf{Case 2: $(\sigma+2)/(p-m)<\theta<N$.} Assume that $u_0$ satisfies \eqref{dec.fast} and \eqref{bound.fast}. In this case, the ideas are completely similar as in the previous one, but we notice that we can no longer employ the same rescaling \eqref{resc.Bar}, since $u_0\not\in L^{1}(\real^N)$ and thus the calculations involving $\|u_0\|_1$ are not valid. Instead, we perform a different scaling
\begin{equation*}
u_{\lambda}(x,t)=\lambda^{\theta}u(y,s), \quad y=\lambda x, \quad s=\lambda^{\gamma}t, \quad \lambda>0,
\end{equation*}
where $\theta$ is given in \eqref{dec.fast} and $\gamma>0$ is to be determined. Straightforward calculations lead to the following equalities:
\begin{equation*}
\begin{split}
&\partial_tu_{\lambda}(x,t)=\lambda^{\theta+\gamma}\partial_{s}u(y,s),\\
&\Delta u_{\lambda}^m(x,t)=\lambda^{m\theta+2}\Delta u^{m}(y,s),\\
&|x|^{\sigma}u_{\lambda}^p(x,t)=\lambda^{\theta p-\sigma}|y|^{\sigma}u^p(y,s).
\end{split}
\end{equation*}
Similarly as in Case 1, we equate the power of $\lambda$ in the first two terms in order to choose $\gamma$, that is,
\begin{equation}\label{interm16}
\gamma=(m-1)\theta+2,
\end{equation}
and after easy manipulations, we deduce that $u_{\lambda}$ solves the equation
\begin{equation}\label{eq.rescbig}
\partial_tu_{\lambda}=\Delta u_{\lambda}^m-\lambda^{\sigma+2-\theta(p-m)}|x|^{\sigma}u_{\lambda}^p, \quad (x,t)\in\real^N\times(0,\infty),
\end{equation}
together with the initial condition
\begin{equation}\label{ic.rescbig}
u_{\lambda}(x,0)=\lambda^{\theta}u_0(\lambda x).
\end{equation}
Picking $x\in\real^N\setminus\{0\}$, we observe that \eqref{dec.fast} and \eqref{ic.rescbig} give
\begin{equation}\label{interm18}
\lim\limits_{\lambda\to\infty}u_{\lambda}(x,0)=\lambda^{\theta}l|\lambda x|^{-\theta}=l|x|^{-\theta},
\end{equation}
which is an essential property in the rest of the proof. We need next the following preparatory result.
\begin{lemma}\label{lem.W}
In the previous notation and conditions, there exists $\overline{l}>0$ such that, for any $\lambda>0$,
$$
u_{\lambda}(x,t)\leq W_{\theta,\overline{l}}\left(x,t+\frac{1}{\lambda^{\gamma}}\right).
$$
\end{lemma}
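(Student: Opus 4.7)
The plan is to combine two observations. First, $u_{\lambda}$ is a weak subsolution of the porous medium equation \eqref{PME} because the absorption term in \eqref{eq.rescbig} is non-positive. Second, the family $\{W_{\theta,l}\}_{l>0}$ satisfies the self-similar identity
\begin{equation*}
W_{\theta,l}(x,t)=t^{-\theta/\gamma}F(|x|t^{-1/\gamma};l), \qquad F(\cdot;l):=W_{\theta,l}(\cdot,1),
\end{equation*}
with $\gamma=(m-1)\theta+2$ as in \eqref{interm16}, so that
\begin{equation*}
W_{\theta,l}(x,1/\lambda^{\gamma})=\lambda^{\theta}F(\lambda|x|;l), \qquad \lambda>0.
\end{equation*}
This scaling identity reduces the $\lambda$-dependent comparison at initial time $t=1/\lambda^{\gamma}$ to a single, $\lambda$-independent pointwise inequality on $u_0$, after the substitution $y=\lambda x$.

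The main step is therefore to choose $\overline{l}>0$ large enough that $F(|y|;\overline{l})\geq u_0(y)$ for every $y\in\real^N$. This is possible because \eqref{bound.fast} together with $u_0\in L^{\infty}(\real^N)$ gives $u_0(y)\leq\min\{\|u_0\|_{\infty},C|y|^{-\theta}\}$, while the profile $F(\cdot;l)$ is smooth and positive, radially decreasing, satisfies $F(r;l)\sim l\,r^{-\theta}$ as $r\to\infty$, and is monotone increasing in $l$ with $F(r;l)\to\infty$ as $l\to\infty$ for every fixed $r$. Both monotonicity properties follow from the comparison principle for \eqref{PME} applied to the initial traces $l|y|^{-\theta}$, together with the scaling identity $F(r;l)=l^{2/\gamma}F(rl^{-(m-1)/\gamma};1)$ coming from the invariance of the porous medium equation. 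Thus, taking $\overline{l}>C$ handles the tail $|y|\geq R$ for some large $R$, while enlarging $\overline{l}$ further forces $F(r;\overline{l})\geq\|u_0\|_{\infty}$ on $[0,R]$, and hence on the remaining interval as well.

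Once such an $\overline{l}$ is fixed, the scaling identity promotes the pointwise bound to
\begin{equation*}
u_{\lambda,0}(x)=\lambda^{\theta}u_0(\lambda x)\leq\lambda^{\theta}F(\lambda|x|;\overline{l})=W_{\theta,\overline{l}}(x,1/\lambda^{\gamma}),
\end{equation*}
valid for every $x\in\real^N$ and every $\lambda>0$. Since $(x,t)\mapsto W_{\theta,\overline{l}}(x,t+1/\lambda^{\gamma})$ is a bounded classical solution of \eqref{PME} on $\real^N\times[0,\infty)$ starting from this very datum, and $u_{\lambda}$ is a bounded weak subsolution of \eqref{PME} with smaller initial datum, the PME comparison principle yields
\begin{equation*}
u_{\lambda}(x,t)\leq W_{\theta,\overline{l}}(x,t+1/\lambda^{\gamma}), \qquad (x,t)\in\real^N\times(0,\infty),
\end{equation*}
uniformly in $\lambda>0$, as claimed.

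The delicate point is uniformity in $\lambda$: the self-similar scaling of $W_{\theta,l}$ is precisely what reduces the problem to the fixed, $\lambda$-free inequality $F(\cdot;\overline{l})\geq u_0$. A secondary technicality is dominating $u_0$ by $F(\cdot;\overline{l})$ on the transition region between the bounded inner zone and the $|y|^{-\theta}$ tail; this is settled by the monotonicity of the family $\{F(\cdot;l)\}$ and its pointwise blow-up as $l\to\infty$.
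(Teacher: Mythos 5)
Your proposal is correct and follows essentially the same route as the paper: dominate $u_0$ pointwise by the self-similar profile $W_{\theta,\overline{l}}(\cdot,1)$ for $\overline{l}$ large (using \eqref{bound.fast} and \eqref{selfsim.prof}), use the self-similar scaling of $W_{\theta,\overline{l}}$ to turn this into the bound $u_{\lambda,0}(x)\leq W_{\theta,\overline{l}}(x,1/\lambda^{\gamma})$ uniformly in $\lambda$, and conclude by the PME comparison principle since $u_{\lambda}$ is a subsolution of \eqref{PME}. You actually supply more detail than the paper on the existence of $\overline{l}$ (monotonicity in $l$ and the scaling identity for $F(\cdot;l)$), where the paper simply asserts it.
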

\begin{proof}[Proof of Lemma \ref{lem.W}]
On the one hand, it is a well-established fact that, for any $l>0$, the solution $W_{\theta,l}$ to \eqref{PME} is in self-similar form, more precisely (see for example \cite{KP86})
\begin{equation}\label{selfsim.W}
W_{\theta,l}(x,t)=t^{-\theta/\gamma}f_l(|x|t^{-1/\gamma}),
\end{equation}
with $\gamma$ given in \eqref{interm16} and $f_l$ being a self-similar profile of the porous medium equation \eqref{PME} satisfying $f_l'(0)=0$ and
\begin{equation}\label{selfsim.prof}
\lim\limits_{\xi\to\infty}\xi^{\theta}f_l(\xi)=l.
\end{equation}
On the other hand, the condition \eqref{bound.fast} together with \eqref{selfsim.prof} and \eqref{selfsim.W} entail that there exists $\overline{l}>0$ sufficiently large such that
$$
u_0(x)\leq f_{\overline{l}}(x)=W_{\theta,\overline{l}}(x,1), \quad x\in\real^N,
$$
whence
$$
u_{\lambda,0}(x)=\lambda^{\theta}u_0(\lambda x)\leq\lambda^{\theta}W_{\theta,\overline{l}}(\lambda x,1), \quad x\in\real^N.
$$
Consider then the function
$$
\psi(x,t)=\lambda^{\theta}W_{\theta,\overline{l}}(\lambda x,\lambda^{\gamma}t+1).
$$
It follows by direct calculation that $\psi$ is a solution to the porous medium equation \eqref{PME} such that
$$
\psi(x,0)=\lambda^{\theta}W_{\theta,\overline{l}}(\lambda x,1)\geq u_{\lambda,0}(x),
$$
and by the comparison principle applied to \eqref{PME} we find that
\begin{equation*}
u_{\lambda}(x,t)\leq\psi(x,t)=\lambda^{\theta}W_{\theta,\overline{l}}(\lambda x,\lambda^{\gamma}t+1), \quad (x,t)\in\real^N\times(0,\infty).
\end{equation*}
We conclude the proof by recalling the self-similar form of $W_{\theta,\overline{l}}$ as given in \eqref{selfsim.W}, which implies that
\begin{equation*}
\begin{split}
\lambda^{\theta}W_{\theta,\overline{l}}(\lambda x,\lambda^{\gamma}t+1)&=\lambda^{\theta}(\lambda^{\gamma}t+1)^{-\theta/\gamma}f_{\overline{l}}(\lambda x(\lambda^{\gamma}t+1)^{-1/\gamma})\\
&=\left(t+\frac{1}{\lambda^{\gamma}}\right)^{-\theta/\gamma}f_{\overline{l}}\left(x\left(t+\frac{1}{\lambda^{\gamma}}\right)^{-1/\gamma}\right)\\
&=W_{\theta,\overline{l}}\left(x,t+\frac{1}{\lambda^{\gamma}}\right),
\end{split}
\end{equation*}
as stated.
\end{proof}
We can now continue with the proof of Case 2 in Theorem \ref{th.asympt.fast2}. We infer from Lemma \ref{lem.W} and the compactness result \cite{DiB83} that we can extract a subsequence $\{u_{\lambda_n}\}_{n\geq1}$ which converges as $n\to\infty$ to some function $U\in C(\real^N\times(0,\infty))$, the convergence being uniform on compact subsets of $\real^N\times(0,\infty)$. Taking into account the essential fact that, in our case,
$$
\sigma+2-\theta(p-m)<0,
$$
we infer by multiplying with test functions in the equation \eqref{eq.rescbig} and then passing to the limit as $\lambda_n\to\infty$ that $U$ is a solution to the porous medium equation \eqref{PME}. Moreover, proceeding exactly as in \cite[Lemma 4]{KP86} to establish short time estimates (as the ones therein follow only by employing the bound established in Lemma \ref{lem.W} and the properties of $W_{\theta,\overline{l}}$), we also deduce from \eqref{interm18} that
$$
\lim\limits_{t\to 0}U(x,t)=l|x|^{-\theta}, \quad x\in\real^N\setminus\{0\},
$$
in distributional sense. Thus, the uniqueness of a solution to \eqref{PME} with initial condition $l|x|^{-\theta}$ implies that $U=W_{\theta,l}$, completing the proof.

\section{Proof of Theorem \ref{th.asympt.border}}

We give here only a sketch of the proof of Theorem \ref{th.asympt.border}, since the technical details are similar to the previous proofs or to technical lemmas established for the porous medium equation in \cite{KU87}. Let us consider an initial condition $u_0$ as in \eqref{ic} and satisfying the estimates \eqref{decay.border} and \eqref{bound.border}. Observing that $u_0\not\in L^{\infty}(\real^N)$ but it is exactly at the borderline between integrability and non-integrability and that the time scales derived in \eqref{asympt.fast2} and \eqref{asympt.fast3} coincide for $\theta=N$, we perform the following scaling introducing a logarithmic factor:
\begin{equation}\label{resc.log}
u_{\lambda}(x,t)=\frac{\lambda^{N}}{\ln\,\lambda}u(y,s), \quad y=\lambda x, \quad s=\frac{\lambda^{mN-N+2}}{(\ln\,\lambda)^{m-1}}t, \quad \lambda>1.
\end{equation}
We find by direct calculations that
\begin{equation*}
\begin{split}
&\partial_tu_{\lambda}(x,t)=\frac{\lambda^{mN+2}}{(\ln\,\lambda)^{m}}\partial_su(y,s),\\
&\Delta u_{\lambda}^m(x,t)=\frac{\lambda^{mN+2}}{(\ln\,\lambda)^{m}}\Delta u^m(y,s),\\
&|x|^{\sigma}u_{\lambda}^p(x,t)=\frac{\lambda^{Np-\sigma}}{(\ln\,\lambda)^p}|y|^{\sigma}u^p(y,s),
\end{split}
\end{equation*}
and thus $u_{\lambda}$ solves the following equation
\begin{equation}\label{eq.rescborder}
\partial_tu_{\lambda}=\Delta u_{\lambda}^m-\frac{\lambda^{\sigma+2-N(p-m)}}{(\ln\,\lambda)^{m-p}}|x|^{\sigma}u_{\lambda}^p, \quad (x,t)\in\real^N\times(0,\infty),
\end{equation}
for any $\lambda>1$. Moreover, the initial condition changes in the following way:
\begin{equation}\label{init.border}
u_{\lambda}(x,0)=\frac{\lambda^N}{\ln\,\lambda}u_0(x), \quad x\in\real^N.
\end{equation}
On the one hand, we notice that the rescaling \eqref{resc.log} brings into play once again the critical exponent $\sigma+2-N(p-m)$ in the absorption term of the rescaled equation \eqref{eq.rescborder}, which is negative in our hypothesis $p>p_F(\sigma)$, which at least at a formal level suggests that an asymptotic simplification might take place in the sense that the absorption term is negligible in the limit $\lambda\to\infty$. On the other hand, the main reason for which we have considered the exact logarithmic factor $(\ln\,\lambda)^{-1}$ in \eqref{resc.log} is the fact that \eqref{init.border} and \eqref{decay.border} imply that
$$
u_{\lambda}(x,0)\to C\delta_0(x), \quad {\rm as} \ \lambda\to\infty,
$$
for some $C>0$, with convergence in distributional sense, where $\delta_0$ is a Dirac distribution concentrated at the origin. This convergence allows us to apply similar arguments as in the previous sections to establish that the limit (on subsequences) of $u_{\lambda}$ as $\lambda\to\infty$ is a solution $U$ to the porous medium equation \eqref{PME} with Dirac mass as initial trace, that is, a Barenblatt solution \eqref{Bar.sol} with the same mass. The proof of \eqref{asympt.border} is completed by undoing the rescaling \eqref{resc.log} exactly in the same way as in \cite[Theorem 1]{KU87}, which proves that the same convergence result holds true for initial conditions $u_0$ to the porous medium equation \eqref{PME} if the estimates \eqref{decay.border} and \eqref{bound.border} are fulfilled. We refrain from entering these technical details.

\section*{Discussion. Cases left out and further problems}

As mentioned in the Introduction, we have decided to refrain from considering in the present work initial conditions $u_0$ satisfying \eqref{dec.rapid} in the range $m<p<p_F(\sigma)$, as well as in the borderline case $p=p_F(\sigma)$.

In the former range, we have established in \cite{IM25} the existence of a unique very singular self-similar solution (satisfying the conditions in \eqref{VSS}) and it is rather expected that this very singular solution will represent the asymptotic profile for such solutions. However, despite the fact that existence and uniqueness of very singular solutions has been established in a number of works such as \cite{KV88, Le96, Le97, PT86}, the authors have been unable to find a proof of the analogous convergence result for $\sigma=0$ and $m>1$, the closest reference seeming to be \cite{PZ91} where such convergence towards a very singular solution is established in the supercritical fast diffusion range $m_c<m<1$. On the one hand, the initial plan of the authors to borrow and adapt ideas from \cite{IL14}, where an asymptotic convergence result to a very singular solution is proved for an equation involving the fast $p$-Laplacian diffusion and a gradient absorption term, seems to fail in a number of technical details. On the other hand, the past experience has shown that proving rigorously a convergence to a very singular solution is always a rather long process, many technical and functional-analytic properties being needed and this is why such a study would have increased by far the number of pages of the present work.

In the latter case $p=p_F(\sigma)$, note that \eqref{dec.rapid} implies $\theta>N=(\sigma+2)/(p_F(\sigma)-m)$ and thus that $u_0\in L^1(\real^N)$. An analogous approach to the one performed in the seminal paper \cite{GV91} based on a stability technique for the convergence could be expected; however, the effect of the coefficient $|x|^{\sigma}$ on the logarithmic rates is to be carefully assessed. This is why, we also refrain from entering this special case, and the two cases discussed in this section will be considered in a future work.

Studying \eqref{eq1} in the complementary range $1<p\leq m$ seems to be a very ambitious and difficult work plan. Indeed, even letting $\sigma=0$, it has been noticed that the large time behavior in these cases is rather complex, involving either a transformation leading to a generalized Fisher-KPP equation for $p=m$, whose large time behavior has been established rather recently in \cite{G20} (see also \cite{DGQ20}), or the formation of a matched asymptotic profile involving a boundary layer if $p\in(1,m)$, see \cite{CV96}. Let us mention that, to the best of our knowledge, the large time behavior to Eq. \eqref{eq1} in the range $1<p<m$ and with $\sigma=0$ is only established in dimension $N=1$. In view of these comments, one can realize the difficulty that the variable coefficient $|x|^{\sigma}$ introduces, for example when dealing with a Fisher-KPP-type equation (for $p=m$) with a weight on the absorption term, which is expected to strongly perturb the well-studied traveling wave profiles.

\bigskip

\noindent \textbf{Acknowledgements} This work is partially supported by the Spanish project PID2024-160967NB-I00.

\bigskip

\noindent \textbf{Data availability} Our manuscript has no associated data.

\bigskip

\noindent \textbf{Conflict of interest} The authors declare that there is no conflict of interest.

\bibliographystyle{plain}

\end{document}